\newtheorem{theo}{Theorem}[section]
\newtheorem*{theo*}{Theorem}
\newtheorem{definition}{Definition}[section]
\newtheorem{prop}[theo]{Proposition}
\newtheorem{lemma}[theo]{Lemma}
\newtheorem{coro}[theo]{Corollary}
\newtheorem{conj}[theo]{Conjecture}
\newtheorem{quest}[theo]{Question}
\newtheorem{ex}[theo]{Example}
\DeclarePairedDelimiter\ceil{\lceil}{\rceil}
\DeclarePairedDelimiter\floor{\lfloor}{\rfloor}
\begin{document}
\title{
On the $e$\nobreakdash-positivity of trees and spiders
}

\author{
Kai Zheng
\thanks
{Department of Mathematics, Massachusetts Institute of Technology,
Cambridge, MA 02142, USA.
Email: {\tt kzzheng@mit.edu}.}
}
\maketitle
\begin{abstract}
    We prove that the chromatic symmetric function of any tree with a vertex of degree at least six is not $e$\nobreakdash-positive, that is, it cannot be written as a nonnegative linear combination of elementary symmetric functions. This makes significant progress towards a recent conjecture of Dahlberg, She, and van Willigenburg, who conjectured the result for the chromatic symmetric functions of all trees with a vertex of degree at least four. We also provide a series of conditions that can identify when the chromatic symmetric function of a spider, a tree consisting of multiple paths all adjacent at a leaf to a center vertex, is not $e$\nobreakdash-positive. These conditions generalize to trees and graphs with cut vertices as well. Finally, by using a result of Orellana and Scott, we give a method to inductively calculate certain coefficients in the elementary symmetric function expansion of the chromatic symmetric function of a spider, leading to further $e$\nobreakdash-positivity conditions for spiders.
\end{abstract}

\section{Introduction}
In 1995, Richard Stanley introduced the chromatic symmetric function of a graph - a generalization of the well known chromatic polynomial \cite{S95}. Stanley's seminal paper included numerous properties of the chromatic symmetric function and an equivalent form of the Stanley-Stembridge conjecture that the chromatic symmetric function of the incomparability graph of any $(3 + 1)$-free poset is $e$\nobreakdash-positive, i.e.~a nonnegative linear combination of elementary symmetric functions \cite{SS}. This conjecture has led to numerous studies related to the classification of graphs with $e$\nobreakdash-positive chromatic symmetric function, or simply  $e$\nobreakdash-positive graphs for short \cite{DFvW, Lolly, DSvW, Foley, G1, G2, GS, GP, MMW, OS}. 

One difficulty of working with the chromatic symmetric function is that there is no deletion-contraction relation like that of the chromatic polynomial. Many have circumvented this problem, however, by using quasisymmetric functions, functions in noncommuting variables, and a vertex weighted version of the chromatic symmetric function \cite{CH, CS, GS, SW}. In each of these cases, the generalization of the chromatic symmetric function allows for some form of a deletion-contraction relation that can then be used to show properties that are inherited by the chromatic symmetric function. 

Another active area of research regards the question of whether the chromatic symmetric function can distinguish trees up to isomorphism \cite{LS, MMW, OS}. Using a probabilistic algorithm, the answer is known to be affirmative for trees of up to 29 vertices \cite{HJ}. The affirmative is also known for certain simplified families of trees such as spiders and caterpillars \cite{AZ, ADZ, LS, MMW}.

Recently, Dahlberg, She, and van Willigenburg have combined these two avenues and studied the $e$\nobreakdash-positivity of trees. This study is also in part motivated by Stanley's initial observation that being $e$\nobreakdash-positive is related to being claw-free and that trees are likely $e$\nobreakdash-positive only ``by accident''. In line with this observation, Dahlberg, She, and van Willigenburg show that many classes of trees are not $e$\nobreakdash-positive, and exhibit numerous conditions for the $e$\nobreakdash-positivity of trees and graphs with cut vertices. In particular, they give the following theorem and conjecture.

\begin{theo} [\!\! Dahlberg, She, and van Willigenburg \cite{DSvW}] \label{dahl bd}
Let $T$ be an $n$-vertex tree with a vertex of degree $d \geq 3$. If $d \geq \log_2{n} + 1,$ then $T$ is not $e$\nobreakdash-positive. 
\end{theo}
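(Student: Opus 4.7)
The plan is to exhibit a single partition $\lambda \vdash n$ for which the coefficient $[e_\lambda] X_T$ in the elementary expansion is strictly negative, with the threshold $2^{d-1} \geq n$ (equivalently $d \geq \log_2 n + 1$) emerging from counting edge-subsets at the high-degree vertex. Let $v \in V(T)$ have degree $d$, let $u_1, \dots, u_d$ be its neighbors, and let $T_i$ be the component of $T - v$ containing $u_i$, so that $\sum_i |V(T_i)| = n - 1$.

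My first step is to expand via Stanley's power-sum formula $X_T = \sum_{F \subseteq E(T)} (-1)^{|F|} p_{\lambda(F)}$, where $\lambda(F)$ records the sizes of the connected components of the spanning forest $(V(T), F)$, and to decompose each forest uniquely as $F = S \sqcup F'$ with $S \subseteq \{vu_1, \dots, vu_d\}$ and $F' \subseteq \bigsqcup_i E(T_i)$. Since $T$ is a tree, once $S$ is fixed the subsums over $F'_i := F' \cap E(T_i)$ factor across $i$ and couple only through the merging of the $u_i$-components into the component of $v$; this factorization is precisely where a $2^d$-sized sum over $S$ first appears.

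I would then pass from the power-sum basis to the elementary basis and extract $[e_\lambda] X_T$ for a carefully chosen hook or near-hook partition $\lambda$. A natural candidate, motivated by matching the star geometry at $v$ and calibrated against small cases $d = 3, 4$, is $\lambda = (n - d + 1, 1^{d-1})$. The goal is to show that $[e_\lambda] X_T$ splits into a dominant negative contribution of magnitude at least $c \cdot 2^{d-1}$ coming from a sign-aligned subset of the $S$-sum, and a positive remainder that is only polynomial in $n$. The inequality $2^{d-1} \geq n$ would then force $[e_\lambda] X_T < 0$ and rule out $e$-positivity.

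The main obstacle is sign control in the $p$-to-$e$ change of basis, which is not sign-definite: \emph{a priori} most of the $2^d$ star-subsets could cancel in the final coefficient. To prevent cancellation I would try to pick $\lambda$ so that only forests $F$ of a very restricted shape (for instance, those whose non-trivial components have size at most two) actually contribute to $[e_\lambda] X_T$, thereby collapsing the sum over $S$ to a single sign-aligned exponential term. A cleaner alternative would be to bypass the $p$-basis entirely and appeal to an acyclic-orientation or sink-sequence model for $[e_\lambda] X_T$ of Greene--Zaslavsky type, which for trees is particularly tractable because each edge is oriented independently, so orientations around $v$ contribute an immediate factor of $2^{d-1}$. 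Either route must be paired with a sharp polynomial upper bound on the positive remainder, which I expect to be the most delicate ingredient of the proof.
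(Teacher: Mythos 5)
Your proposal identifies the correct numerical threshold ($2^{d-1}$ versus $n$) but does not contain a proof: the two steps that would carry all the weight --- showing that a sign-aligned portion of the sum over star-subsets $S$ survives the $p$-to-$e$ change of basis and produces a negative contribution of size at least $c\cdot 2^{d-1}$ in $[e_\lambda]X_T$ for $\lambda=(n-d+1,1^{d-1})$, and bounding the positive remainder polynomially in $n$ --- are exactly the steps you leave open, and you say so yourself. Neither is routine. The $p$-to-$e$ transition is not sign-definite; there is no acyclic-orientation formula for an individual coefficient $[e_\lambda]X_G$ (Stanley's sink theorem only controls the sums $\sum_{\ell(\lambda)=j}[e_\lambda]X_G$, which are nonnegative, so that route cannot by itself certify a negative coefficient); and for a hook-shaped $\lambda$ there is no a priori reason the $2^d$ terms indexed by $S$ fail to cancel. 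As written, the argument cannot be completed without essentially new ideas, and the choice of $\lambda$ is itself only a guess.

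The theorem has a short proof that needs no coefficient computation at all, and it is the proof this paper's machinery is organized around. By Lemma \ref{lm: dahl trees} one reduces to the spider $S(t_1,\dots,t_d)$ with $t_1\ge\cdots\ge t_d$ and $1+\sum_i t_i=n$, and by Wolfgang's Lemma \ref{Wolf} it suffices to exhibit a missing connected partition. In a spider, the part of a two-block connected partition that avoids the center must be a terminal segment of a single leg, so a connected partition of type $(n-k,k)$ with $k\le n-k$ exists only if $k\le t_1$; pushing this further (condition 1 of Corollary \ref{Cor: var cond}) shows that $e$-positivity forces $t_i\ge t_{i+1}+\cdots+t_d$ for every $i$. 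Hence $t_{d-1}\ge t_d\ge 1$, $t_{d-2}\ge 2$, and inductively $t_{d-k}\ge 2^{k-1}$, so $n-1=\sum_i t_i\ge 2^{d-1}$, i.e.\ $d<\log_2 n+1$ for any $e$-positive tree. The exponential threshold thus comes from a pigeonhole on connected partitions forcing geometric growth of the leg lengths, not from an exponentially large negative coefficient beating a polynomial remainder; Theorem \ref{new bd} of the paper sharpens precisely this argument.
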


\begin{conj} [\!\! Dahlberg, She, and van Willigenburg \cite{DSvW}] \label{DSvW Conj}
Any tree with a vertex of degree at least four is not $e$\nobreakdash-positive.
\end{conj}
Dahlberg, She, and van Willigenburg obtain their results by reducing, in many cases, the $e$\nobreakdash-positivity of trees to that of spiders. Specifically, they show that certain classes of spiders are not $e$\nobreakdash-positive and then use a lemma to extend the same results to some corresponding classes of trees. In this paper, we follow the same strategy and give  further conditions for the $e$\nobreakdash-positivity of trees and spiders. Our main result is a partial resolution of Conjecture~\ref{DSvW Conj} to trees with a vertex of degree at least six, which we obtain via a similar reduction to spiders. Our paper is structured as follows. 

In Section \ref{sec: bg} we review necessary background on chromatic symmetric functions and notation that will be used throughout the paper. 

In Section \ref{sec: e-pos} we expand on many of the tests for $e$\nobreakdash-positivity from \cite{DSvW} and build up to our main result. We show in Theorem \ref{Th: mod-test} that the sum of the residues of the leg lengths of an $e$\nobreakdash-positive spider modulo any positive integer must be less than twice that integer. While this theorem is not used in our attempt at Conjecture~\ref{DSvW Conj}, it provides a simple test for $e$\nobreakdash-positivity and rules out the $e$\nobreakdash-positivity of many classes of spiders, which we discuss in Corollary \ref{Cor: var cond}. We then give a more complicated criterion for the $e$\nobreakdash-positivity of spiders that leads to our main result, stated in Theorem~\ref{new result}.

\begin{theo*}
Any spider with at least six legs is not $e$\nobreakdash-positive.
\end{theo*}

We then extend this theorem to trees and graphs with cut vertices by applying \cite[Lemmas~12~and~13]{DSvW}. This yields the partial resolution of Conjecture~\ref{DSvW Conj}.

\begin{theo*}
Any tree with a vertex of degree at least six is not $e$\nobreakdash-positive.
\end{theo*}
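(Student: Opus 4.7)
The plan is to derive the tree statement from a stronger spider result, namely Theorem \ref{new result}. First I would invoke the reduction principle emphasized in the introduction: the paper's $e$-positivity tests for spiders generalize to trees and graphs with cut vertices, so it suffices to show that no spider with $k \geq 6$ legs is $e$-positive and then transfer the conclusion to any tree possessing a vertex of degree $\geq 6$ via the cut-vertex argument from \cite{DSvW}.

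To rule out $e$-positivity for a spider with $k \geq 6$ legs of lengths $\ell_1 \leq \cdots \leq \ell_k$, I would combine every constraint accumulated in Section \ref{sec: e-pos}. Theorem \ref{dahl bd} forces $n = 1 + \sum_i \ell_i > 2^{k-1} \geq 32$. The half-vertex condition in Corollary \ref{Cor: var cond} then forces $\ell_k \geq n/2$, so a single leg dominates. Theorem \ref{Th: mod-test} applied with $m = 2$ caps the number of odd legs at three, and applied with $m = 3, 4, \ldots$ pins the residues of the $\ell_i$ into narrow classes. These constraints already substantially thin the admissible vectors $(\ell_1, \ldots, \ell_k)$ once $k \geq 6$.

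The decisive input is Theorem \ref{new result}, whose divisibility criteria should finish the job. My plan is a short case analysis on the residues of the five shortest legs modulo a few small integers — the natural choices are $2$, $3$, and perhaps $4$ — using Theorem \ref{new result} together with Corollary \ref{Cor: var cond} to eliminate each residue pattern in turn. The main obstacle I foresee is combinatorial bookkeeping: six or more legs provide many degrees of freedom, and I must verify that every residue pattern, every way of distributing the at-most-three odd legs among the short ones, and all extreme placements of $\ell_k$ are addressed, including degenerate cases such as legs of length $1$ or repeated leg lengths. Once the spider case is closed, the tree statement, Corollary \ref{new result trees}, follows immediately from the cut-vertex reduction.
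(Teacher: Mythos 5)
There is a genuine gap here, and it comes in two parts. First, your argument is circular at the decisive moment: you cite Theorem \ref{new result} as ``the decisive input'' whose ``divisibility criteria should finish the job,'' but Theorem \ref{new result} \emph{is} the statement that every spider with at least six legs is not $e$-positive --- it is exactly the spider result you set out to prove before transferring to trees via Lemma \ref{lm: dahl trees}. It contains no divisibility criteria of its own; if you are allowed to invoke it, the tree statement follows in one line and the entire residue analysis is superfluous, and if you are not, you have no proof of the spider case. (You may have meant Theorem \ref{Th: qm-test}, but see below.)

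Second, the substitute strategy you sketch --- a finite case analysis on the residues of the leg lengths modulo $2$, $3$, and perhaps $4$, using Theorem \ref{Th: mod-test} and Corollary \ref{Cor: var cond} --- cannot succeed. Theorem \ref{Th: mod-test} only obstructs a spider when the residues $\lambda_i^{(m)}$ are collectively large; a spider all of whose leg lengths are divisible by $12$, say, has $1+\lambda_1^{(m)}+\cdots+\lambda_d^{(m)}=1=r$ for $m=2,3,4$ and therefore passes every one of these tests, no matter how many legs it has. Moreover there are infinitely many leg-length vectors, so ``every residue pattern'' is not a finite list unless some quantitative bound first pins the lengths down. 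The paper's actual proof of Theorem \ref{new result} is of a different character: it uses Theorem \ref{Dahl-q} and its refinement Theorem \ref{Th: qm-test} to force a geometric-type growth $\lambda_i+1>\sqrt{\tfrac{n}{2}(\lambda_{i+1}+1)}$ (Corollary \ref{Cor: sqrt bd}), sums these to get the inequality of Theorem \ref{new bd}, and then applies Theorem \ref{Th: qm-test} with the carefully chosen parameters $m=\bigl\lfloor\tfrac{\lambda_2+1}{\lambda_3+1}\bigr\rfloor$ and $a=\bigl\lceil\tfrac{\lambda_2+1}{m}\bigr\rceil$ --- a modulus that depends on the leg lengths themselves, not a fixed small integer --- to exhibit a missing connected partition whenever $d\geq 6$. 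Your framing of the final transfer to trees via the cut-vertex reduction is correct, as is the observation that the longest leg must contain at least half the vertices, but the core of the argument is missing. (A small further slip: Theorem \ref{Th: mod-test} with $m=2$ caps the number of odd legs at two, not three.)
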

\begin{theo*}
Any graph with a cut vertex whose deletion produces a graph with at least six connected components is not $e$\nobreakdash-positive.
\end{theo*}

In Sections \ref{sec: small spiders} and \ref{sec: coeff calcs}, we turn to calculating specific coefficients in the chromatic symmetric functions of spiders and come up with further conditions for $e$\nobreakdash-positivity. 

In Section \ref{sec: small spiders} we focus on spiders with three or four legs. We present a decomposition for the chromatic symmetric function of spiders, in Lemma~\ref{lm: spi decomp}, in terms of the chromatic symmetric functions of spiders with fewer legs. By using this decomposition, we are able to calculate specific expansion coefficients of certain elementary symmetric functions. Very recently, Wang and Wang have also used this decomposition to further classify the spiders with three legs that are $e$\nobreakdash-positive \cite{Wang}.

In Section \ref{sec: coeff calcs} we apply this decomposition to spiders with more legs. Our main result of this section gives further conditions for when spiders with odd length legs are $e$\nobreakdash-positive. While in \cite{DSvW} it is found that spiders with at least $3$ odd length legs are not $e$\nobreakdash-positive, we find that spiders with at least four legs, exactly two odd length legs, and longest leg with even length, are also not $e$\nobreakdash-positive. 

Finally, in Section \ref{sec: fin} we conclude with questions and conjectures that suggest the $e$\nobreakdash-positivity of spiders is itself an interesting subject.

\section{Background} \label{sec: bg}
In this section we provide the necessary notation and background information. We start with our notation for partitions and spiders.

\begin{definition}
We say that $\lambda = (\lambda_1, \ldots, \lambda_{\ell(\lambda)})$ is a \emph{partition} of $n$, denoted by $\lambda \vdash n$, if $\lambda_1 + \cdots + \lambda_{\ell(\lambda)} = n$ and $\lambda_1 \geq \cdots \geq  \lambda_{\ell(\lambda)} > 0$.
\end{definition}

The individual $\lambda_i$'s are often called the \emph{parts} of the partition $\lambda$, and if there are $j$ parts equal to $k$, this is often abbreviated by $k^j$ in the partition. For example, $\lambda = (\lambda_1^{a_1}, \ldots , \lambda_{\ell(\lambda)}^{a_{\ell(\lambda)}})$ denotes the partition of $n = \sum_{i=1}^{\ell(\lambda)} a_i \lambda_i$ that has $a_i$ parts equal to $\lambda_i$ for each $i = 1, \ldots, \ell(\lambda)$.

Given an $n$-vertex graph $G = G(V, E)$, a \emph{connected partition} $C$ of $G$ is a partitioning of its vertices $V$ into sets $\{V_1, \ldots, V_k\}$ such that the induced subgraph of $G$ on each $V_i$ is connected. The \emph{type} of the connected partition $C$ is the partition of $n$ with parts equal to the sizes of the sets $V_i$ sorted in decreasing order. 

Our results in the next section focus on a specific family of graphs known as spiders. Two examples of spiders are shown in Example~\ref{ex: spiders}.

\begin{definition}
A spider with $d$ legs is a graph consisting of $d$ paths, each adjacent at a leaf to a center vertex. Given a partition $\lambda = (\lambda_1, \ldots, \lambda_d)$ of $n-1$, denoted by $\lambda \vdash n-1$, the spider $S(\lambda) = S(\lambda_1, \ldots, \lambda_d)$ is the $n$ vertex graph consisting of $d$ paths, $P_{\lambda_1}, \ldots, P_{\lambda_d}$, of orders $\lambda_1, \ldots, \lambda_d$ respectively, all connected to a vertex, $v$, of degree $d$. Each path $P_{\lambda_i}$ is called a leg of length $\lambda_i$ and consists of $\lambda_i$ vertices\footnote{We note that this differs from the usual definition of the length of a path, which refers the number of edges on the path. When referring to the length of a spider's leg, we mean the number of vertices on the leg. When discussing paths however, $P_n$ will be called the path of order $n$.}. The vertex $v$ is called the \emph{center}.
\end{definition}

\begin{ex} \label{ex: spiders}
The spiders $S(2,1,1)$ and $S(3,2,1)$. 
\vspace{1cm}
\begin{center}
\begin{tikzpicture}
\draw[fill=black] (-3,0) circle (3pt);
\draw[fill=black]  (-4,-1) circle (3pt);
\draw[fill=black]  (-4,-2) circle (3pt);
\draw[fill=black]  (-3,-1) circle (3pt);
\draw[fill=black]  (-2,-1) circle (3pt);

\draw[fill=black] (3,0) circle (3pt);
\draw[fill=black] (2,-1) circle (3pt);
\draw[fill=black] (2,-2) circle (3pt);
\draw[fill=black] (2,-3) circle (3pt);
\draw[fill=black] (3,-1) circle (3pt);
\draw[fill=black] (3,-2) circle (3pt);
\draw[fill=black] (4,-1) circle (3pt);
\draw[thick] (-3,0) -- (-4,-1) -- (-4,-2) (-3,0)--(-3,-1) (-3,0)--(-2,-1);

\draw[thick](3,0) -- (2,-1) -- (2,-2)--(2,-3) (3,0)--(3,-1)--(3,-2) (3,0)--(4,-1);
\end{tikzpicture}
\end{center}
\end{ex}

Next, we briefly review some background on the chromatic symmetric function and symmetric functions in general. For a more in depth treatment of symmetric functions we refer the reader to \cite{MacDonald} and \cite{Sagan}.

A function $f(x_1, x_2, \ldots) \in \mathbb{R}[x_1, x_2, \ldots]$ is said to be \emph{symmetric} if it is invariant under any permutation of its indeterminates. The algebra of symmetric functions is the graded algebra $\Lambda = \Lambda^0 \oplus \Lambda^1 \oplus \cdots$, where $\Lambda^d$ consists of the homogeneous degree $d$ symmetric functions, and is a subalgebra of $\mathbb{R}[x_1, x_2, \ldots]$. 

A common basis of $\Lambda$ are the \emph{elementary symmetric} functions. The $n$th elementary symmetric function, $e_n$, for $n \geq 1$ is given by
\begin{equation*}
e_n = \sum\limits_{j_1 < j_2 < \cdots < j_n} x_{j_1} \cdots x_{j_n},
\end{equation*}
while the elementary symmetric function, $e_{\lambda}$, for a partition $\lambda = (\lambda_1, \ldots,\lambda_{\ell(\lambda)})$, is 
\begin{equation*}
     e_\lambda = e_{\lambda_1} \cdots e_{\lambda_{\ell(\lambda)}}.
\end{equation*}
 A symmetric function is said to be \emph{$e$\nobreakdash-positive} if it can be written as a nonnegative linear combination of elementary symmetric functions. 

In this paper, the symmetric functions we use are chromatic symmetric functions of graphs. These are defined given a finite and simple graph as follows.
\begin{definition} [\!\! Stanley \cite{S95}]
Let $G$ be an $n$-vertex graph with vertex set $V_G = \{v_1, \ldots,$ $v_n\}$. The \emph{chromatic symmetric function} is defined by
\begin{equation*}
    X_G = \sum_{\kappa}x_{\kappa(v_1)} \cdots x_{\kappa(v_n)},
\end{equation*}
where the sum is over all proper colorings $\kappa: V_G \xrightarrow[]{} \{1, 2, \ldots \}$, i.e., colorings in which $\kappa(v) \neq \kappa(u)$ if $v$ and $u$ are adjacent.
\end{definition}

For simplicity, we say that a graph itself is $e$\nobreakdash-positive if its chromatic symmetric function is $e$\nobreakdash-positive. We use $[e_\lambda]X_G$ to denote the coefficient of $e_\lambda$ in the expansion of $X_G$ in the basis of elementary symmetric functions, and refer to these coefficients collectively as \emph{$e$\nobreakdash-coefficients}. Thus, a graph is $e$\nobreakdash-positive if and only if all of the $e$\nobreakdash-coefficients of its chromatic symmetric function are nonnegative.

\section{$e$\nobreakdash-positivity conditions for spiders} \label{sec: e-pos}
In this section, we give $e$\nobreakdash-positivity conditions for spiders and apply a lemma of \cite{DSvW}, which we state later, to generalize these results to trees. Throughout this section, when referring to a spider $S(\lambda_1, \ldots, \lambda_d)$, it is assumed that $\lambda_1 \geq \cdots \geq \lambda_d$. We rely on the following lemma to show that a spider is not $e$\nobreakdash-positive.

\begin{lemma} [\!\! Wolfgang \cite{W97}] \label{Wolf}  
If an $n$-vertex graph $G$ is $e$\nobreakdash-positive then it has a connected partition of type $\lambda$ for every $\lambda \vdash n$.
\end{lemma}

Thus, in order to show that a spider is not $e$-positive, it suffices show that it is missing a connected partition of some type. Dahlberg, She, and van Willigenburg use this strategy to present numerous $e$\nobreakdash-positivity conditions for spiders including the following two theorems in \cite{DSvW}. 

\begin{theo} [\!\! Dahlberg, She, and van Willigenburg \cite{DSvW}] \label{Th: Dahl-odd}
Every spider with at least three legs of odd length is not $e$\nobreakdash-positive.
\end{theo}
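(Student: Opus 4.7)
The plan is to apply Lemma \ref{Wolf} by exhibiting a connected-partition type that no such spider can realize. Let $S = S(\lambda_1, \dots, \lambda_d)$ have $k \geq 3$ odd-length legs, and let $n+1 = 1 + \sum_i \lambda_i$ denote its vertex count, so that $n \equiv k \pmod{2}$. I would propose the candidate
\[
\mu = \begin{cases} (2^{(n+1)/2}) & \text{if } k \text{ is odd,}\\ (2^{n/2},\,1) & \text{if } k \text{ is even,}\end{cases}
\]
so that $\mu \vdash n+1$ has all of its parts in $\{1,2\}$ and contains at most one part equal to $1$.

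Next I would record the shape of any connected partition of a spider. The piece $C$ containing the center $v$ must intersect each leg in a (possibly empty) prefix starting at $v$; hence $C$ is encoded by a subset $A \subseteq \{1,\dots,d\}$ together with prefix lengths $a_i \geq 1$ for $i \in A$, and $|C| = 1 + \sum_{i \in A} a_i$. The remaining vertices on leg $i$ form a sub-path of length $\lambda_i - a_i$ (if $i \in A$) or $\lambda_i$ (if $i \notin A$), which may be partitioned into sub-paths freely. The only parity fact I need is that any partition of a path on $\ell$ vertices into parts of sizes in $\{1,2\}$ uses at least $\ell \bmod 2$ parts of size $1$.

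Since every part of $\mu$ lies in $\{1,2\}$, I would then force $|C| \in \{1,2\}$ and split into two cases. If $|C| = 1$, then $A = \emptyset$, the center itself is a singleton part, and each of the $k$ odd legs contributes at least one more singleton part, giving at least $k+1 \geq 4$ parts of size $1$ --- far more than the one that $\mu$ permits. If $|C| = 2$, then $A = \{i\}$ and $a_i = 1$, so leg $i$'s residual has length $\lambda_i - 1$ while the other legs retain their original parities; a direct count shows the number of odd-length residual sub-paths is $k-1$ or $k+1$ depending on the parity of $\lambda_i$, hence at least $k-1 \geq 2$ in either case, once again producing more size-$1$ parts than $\mu$ can accommodate. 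This contradicts the existence of a connected partition of type $\mu$, so Lemma \ref{Wolf} gives the conclusion.

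The only (minor) obstacle is the parity bookkeeping: in particular, observing that $n+1$ and $k$ have opposite parity so that the two candidates for $\mu$ really are partitions of the right total. With that in hand the proof requires no chromatic-symmetric-function computation whatsoever; it rests solely on Wolfgang's lemma and the rigid shape of connected pieces at the center of a spider.
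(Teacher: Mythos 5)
Your proof is correct, and it is essentially the paper's approach: the paper obtains this theorem as the $m=2$ case of Theorem \ref{Th: mod-test}, whose proof likewise exhibits the missing connected partition of type $(2^{\lfloor N/2\rfloor}, N\bmod 2)$ via Wolfgang's lemma and the same parity bookkeeping of leftover vertices on each leg. Your counting of forced singleton parts is just a reformulation of the paper's observation that the part containing the center would have to absorb one leftover vertex from each odd leg and hence be too large.
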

\begin{theo}[\!\! Dahlberg, She, and van Willigenburg \cite{DSvW}]  \label{Dahl-q}
Let $S = S(\lambda_1, \ldots, \lambda_d)$ be an $n$-vertex spider. For some $\lambda_i$ such that $2 \leq i < d$, let $n = q(\lambda_i + 1) + r$ where $0 \leq r < \lambda_i + 1$, $r = qd' + r'$, $0 \leq r' < q$, and $t = \lambda_{i+1} + \cdots + \lambda_d$. If $q \geq \frac{\lambda_i + 1}{t - 1}$ then $S$ is not $e$\nobreakdash-positive. In particular, it is missing a connected partition of type
\begin{equation*}
    (\lambda_i + d' + 2)^{r'}(\lambda_i + d' + 1)^{q-r'}.
\end{equation*}
\end{theo}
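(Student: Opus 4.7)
The plan is to apply Lemma \ref{Wolf} by exhibiting that $S$ admits no connected partition of the claimed type $\mu = (\lambda_i + d' + 2)^{r'}(\lambda_i + d' + 1)^{q-r'}$. A routine check confirms $\mu \vdash n$, since $r'(\lambda_i + d' + 2) + (q-r')(\lambda_i + d' + 1) = q(\lambda_i + d' + 1) + r' = q(\lambda_i + 1) + r = n$. So suppose for contradiction that a connected partition of $S$ of type $\mu$ exists, and let $V_0$ denote the unique part containing the center $v$.

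The key structural observation I would use is that every connected subgraph of a spider is either (a) a sub-path inside a single leg or (b) a subtree containing $v$ consisting of $v$ together with a (possibly empty) initial segment of each leg, since removing $v$ disconnects $S$ into its legs. Accordingly $V_0$ is of type (b) while every other part is a sub-path of some leg. For each $j$ let $a_j \in \{0, 1, \ldots, \lambda_j\}$ be the number of vertices of leg $P_{\lambda_j}$ lying in $V_0$; the residual $\lambda_j - a_j$ vertices form a sub-path that must be partitioned into parts each of size at least $\lambda_i + d' + 1$. Hence for every $j$, either $a_j = \lambda_j$ or $\lambda_j - a_j \geq \lambda_i + d' + 1$. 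The second option is impossible for $j \geq i+1$, since $\lambda_j \leq \lambda_i < \lambda_i + d' + 1$, and it is also impossible for $j = i$, since it would require $\lambda_i - a_i > \lambda_i$. Consequently legs $i, i+1, \ldots, d$ are entirely contained in $V_0$, giving $|V_0| \geq 1 + \lambda_i + t$.

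On the other hand $V_0$ is a part of $\mu$, so $|V_0| \leq \lambda_i + d' + 2$, and combining with the previous inequality yields $t \leq d' + 1$. The hypothesis $q \geq (\lambda_i + 1)/(t - 1)$ contradicts this: from $r = qd' + r' \leq \lambda_i$ we get $qd' \leq \lambda_i$, so
\[
d' \;\leq\; \frac{\lambda_i}{q} \;\leq\; \frac{\lambda_i(t - 1)}{\lambda_i + 1} \;<\; t - 1,
\]
forcing $d' \leq t - 2$, i.e.\ $t \geq d' + 2$. This directly contradicts $t \leq d' + 1$, so no connected partition of type $\mu$ exists and $S$ is not $e$-positive by Lemma \ref{Wolf}.

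The main obstacle will be establishing the structural dichotomy for connected subgraphs of a spider together with the sharper observation that the hypothesis pins not just the short legs but also leg $i$ itself entirely inside $V_0$; once those are in place the proof collapses to the short inequality chain above. The edge cases $r' = 0$, $q = r'$, or $d' = 0$ need no separate treatment, since the argument only uses the bound $|V_0| \leq \lambda_i + d' + 2$, and the hypothesis $q \geq (\lambda_i + 1)/(t - 1)$ implicitly forces $t \geq 2$ so that the divisions above are legitimate.
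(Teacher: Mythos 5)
Your proof is correct and follows essentially the same approach as the paper's argument (the paper treats Theorem \ref{Dahl-q} as the $m=1$ case of Theorem \ref{Th: qm-test}, whose proof likewise forces the legs of lengths $\lambda_i,\ldots,\lambda_d$ into the single block containing the center and then derives a size contradiction with the largest part $\lambda_i+d'+2$ from the hypothesis $q \geq \frac{\lambda_i+1}{t-1}$). Your direct verification that each leftover segment of a leg must have size at least $\lambda_i+d'+1$ is just a streamlined version of the paper's counting of how many blocks can meet leg $i$, so the two arguments coincide in substance.
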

Using the same strategy, we start by generalizing the above theorems.
\begin{theo} \label{Th: mod-test}
Let $S = S(\lambda_1, \ldots, \lambda_d)$ be an $n$-vertex spider. For a positive integer $m > 1$, let $\lambda_i^{(m)}$ denote the residue of $\lambda_i$ modulo $m$. Let $n = mq + r$ with $0 \leq r < m$.
Then $S$ has a connected partition of type $(m^q, r)$ if and only if one of the following holds:
\begin{enumerate}
    \item $1+\lambda_1^{(m)} + \cdots + \lambda_d^{(m)} = r$ 
    \item $1+\lambda_1^{(m)} + \cdots + \lambda_d^{(m)} = m+r$ and $\lambda_i^{(m)} \geq r$ for some $i \in \{1, \dots, d \}$.
\end{enumerate}
In particular, if $1+\lambda_1^{(m)} + \cdots + \lambda_d^{(m)} \geq 2m$, or  $1+\lambda_1^{(m)} + \cdots + \lambda_d^{(m)} = m+r$ and $\lambda_i^{(m)} < r$ for each $i = 1, \ldots, d$, then $S$ is not $e$\nobreakdash-positive. 
\end{theo}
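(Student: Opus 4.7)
Plan: The key observation is that every connected partition of the spider $S(\lambda_1,\ldots,\lambda_d)$ is determined by a tuple $(k_1,\ldots,k_d)$ with $0 \le k_i \le \lambda_i$ recording how many vertices of each leg $P_{\lambda_i}$, counted from the center $v$ inward, are placed in the block containing $v$. That block then has size $1+\sum k_i$, and each leg independently partitions its remaining $\lambda_i-k_i$ vertices into any multiset of positive integers (since every path admits every partition as a connected partition). So the question is purely a bookkeeping problem on the $k_i$.

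Writing $\lambda_i = mq_i + a_i$ with $a_i := \lambda_i^{(m)} \in \{0,\ldots,m-1\}$ and $L := a_1+\cdots+a_d$, I would first note that $n = 1 + L + m\sum q_i$, so $1 + L \equiv r \pmod{m}$ automatically, and hence $1+L \in \{r,\, m+r,\, 2m+r,\ldots\}$. Since the block containing $v$ must itself be one of the parts, its size is either $r$ or $m$, and the plan is to split into two cases. In Case A ($s_v = r$, requiring $r \ge 1$), every leg must be partitioned into parts of size $m$, forcing $k_i \equiv a_i \pmod{m}$ with $k_i \ge 0$; thus $\sum k_i \ge L$, and matching $\sum k_i = r - 1$ is possible exactly when $L = r - 1$, i.e.\ $1+L = r$, in which case $k_i := a_i$ realizes the partition. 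In Case B ($s_v = m$), one distinguished leg $j$ (or none, if $r = 0$) carries the unique part of size $r$; the analogous residue analysis gives $k_j \equiv a_j - r \pmod{m}$ and $k_i \equiv a_i \pmod{m}$ for $i \ne j$, with the room constraint $k_j \le \lambda_j - r$. A short case check on the sign of $a_j - r$ shows that the minimum of $\sum k_i$ equals $L - r$ when some $a_j \ge r$ and $L + m - r$ otherwise, so feasibility of $\sum k_i = m-1$ holds iff $1 + L = m + r$ and some $a_j \ge r$ (the sub-case $a_j < r$ raises the minimum by an extra $m$, so it only recovers the $1+L = r$ situation already covered by Case A).

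Combining the two cases yields the biconditional. The ``in particular'' statement follows immediately from Lemma \ref{Wolf}: if $1 + L \ge 2m$, then $1 + L$ equals neither $r$ (since $r < m$) nor $m + r$ (since $m + r < 2m$), so both cases fail; and if $1 + L = m + r$ with every $a_i < r$, Case B fails while Case A is unreachable. In either situation $S$ is missing the connected partition of type $(m^q, r)$ and hence is not $e$-positive.

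The main care point is the feasibility bookkeeping in Case B: one must verify that the chosen $k_j$ satisfies $0 \le k_j \le \lambda_j - r$ (which reduces to $q_j \ge 0$ when $a_j \ge r$, hence is automatic) and that the leftover path $P_{\lambda_j - k_j} = P_{mq_j + r}$ is actually cut into $q_j$ parts of size $m$ and one of size $r$. One must also handle the degenerate boundary $r = 0$, where the distinguished leg disappears and the clause ``$a_j \ge r$'' becomes vacuous, collapsing the condition to $1 + L = m$. No deeper obstruction is expected, since paths admit every partition and the entire theorem reduces to careful residue arithmetic on the leg lengths.
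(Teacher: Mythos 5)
Your proposal is correct and follows essentially the same route as the paper: both arguments reduce the existence of a type-$(m^q,r)$ connected partition to residue arithmetic on the leg lengths, split according to whether the part of size $r$ contains the center or sits on a single leg, and then invoke Lemma \ref{Wolf} for the non-$e$-positivity consequence. Your parametrization by the tuple $(k_1,\ldots,k_d)$ is just a more systematic packaging of the paper's case analysis, and your feasibility bookkeeping (including the boundary case $r=0$ and the check $k_j\le\lambda_j-r$) matches the paper's explicit constructions.
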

\begin{proof}
    For the forward direction, suppose $S$ has a connected partition $C$ of type $(m^q, r)$.
    
    If $r = 0$, then the vertices of each leg are partitioned into sets of size $m$ with $\lambda_i^{(m)}$ vertices left over for each leg of length $\lambda_i$. For each $i$, $\lambda_i^{(m)} < m$, so the $\lambda_i^{(m)}$ vertices must be in a set containing the center. Thus, there is a set in the connected partition $C$ consisting of $1 + \lambda_1^{(m)} + \cdots + \lambda_d^{(m)}$ vertices. It follows that $1 + \lambda_1^{(m)} + \cdots + \lambda_d^{(m)} = m = m + r$ and for some $i$, $\lambda_i^{(m)} \geq 0$.
    
    If $r > 0$, let $V_1 \in C$ be the set of vertices of size $r$. First suppose $V_1$ contains the center and let $t_i$ be the number of vertices on the leg of length $\lambda_i$ contained in $V_1$. Notice that $t_i \equiv \lambda_i^{(m)} \pmod{m}$ and, in particular, $t_i \geq \lambda_i^{(m)}$. It follows that $|V_1| = r = 1 + t_1 + \cdots + t_d \geq 1+\lambda_1^{(m)} + \cdots + \lambda_d^{(m)}$. However, as $1+\lambda_1^{(m)} + \cdots + \lambda_d^{(m)} \equiv r \pmod{m}$, it follows that $1+\lambda_1^{(m)} + \cdots + \lambda_d^{(m)} = r$, completing the forward direction.
    
    Now suppose that $V_1$, the set of vertices of size $r$, does not contain the center. Then, $V_1$ must only contain vertices from a single leg of the spider. Let this leg be of length $\lambda_i$.
    Since the rest of the vertices are all in sets of size $m$, it follows that the set of vertices containing the center is of size $1+\lambda_1^{(m)} + \cdots + (\lambda_i-r)^{(m)} + \cdots + \lambda_d^{(m)} = m$. We show that this implies one of the two stated conditions must hold. If $\lambda_i^{(m)} < r$, then $(\lambda_i-r)^{(m)} = \lambda_i^{(m)} + m-r$ and the previous equation simplifies to $1+\lambda_1^{(m)} + \cdots + \lambda_i^{(m)} + \cdots + \lambda_d^{(m)} = r$. If $\lambda_i^{(m)} \geq r$, then $(\lambda_i-r)^{(m)} = \lambda_i^{(m)} - r$, and $1+\lambda_1^{(m)} + \cdots + \lambda_i^{(m)} + \cdots + \lambda_d^{(m)} = m+r$. 
    
    For the reverse direction, if $1+\lambda_1^{(m)} + \cdots + \lambda_d^{(m)} = r$, then each leg can be partitioned into paths of order $m$ with a connected set of $1+\lambda_1^{(m)} + \cdots + \lambda_d^{(m)} = r$ vertices left over yielding the desired partition. If $1+\lambda_1^{(m)} + \cdots + \lambda_d^{(m)} = m+r$ and $\lambda_i^{(m)} \geq r$ for some $i = 1, \dots, d$, then pick the leg of length $\lambda_i$ such that $\lambda_i^{(m)} \geq r$ and use the $r$ vertices from the end, that is the $r$ vertices forming a path terminating at the leaf, as one set. With this set of $r$ vertices removed, partition the rest of the spider into paths of length $m$ starting at the end of each leg. This leaves $1+\lambda_1^{(m)} + \cdots + (\lambda_i-r)^{(m)} + \cdots + \lambda_d^{(m)} = m + r - r = m$ vertices in the last set and yields the desired connected partition.
    
\end{proof}

Note that Theorem \ref{Th: Dahl-odd} is a specialization of Theorem \ref{Th: mod-test} when $m = 2$. Theorem \ref{Th: mod-test} has many simple yet powerful applications. We state a few particularly interesting ones in the next corollary. Some of the conditions mentioned can be found in \cite{DSvW}.

\begin{coro} \label{Cor: var cond}
Let $S = S(\lambda_1, \ldots, \lambda_d)$ be an $n$-vertex spider. If any of the following conditions hold, then $S$ is missing a connected partition of some type and is not $e$\nobreakdash-positive.
\begin{enumerate}
    \item $\lambda_i < \lambda_{i+1} + \cdots + \lambda_d$ for some $i$. When $i = 1$, this condition is equivalent to $\lambda_1 < \floor{\frac{n}{2}}$. When, $\lambda_i \geq 2$ and $i \geq 2$, \cite[Lemma 28]{DSvW} gives a slightly stronger condition with $\lambda_i \leq \lambda_{i+1} + \cdots + \lambda_d$.
    \item At least $2m-1$ legs have length not divisible by $m$.
    \item At least $m$ legs have length not divisible by $m$, where $m \mid n$
    \item $m | n$, and $\lambda_i + 1 \leq m \leq \lambda_i + \cdots + \lambda_d$ for some $i$.
    \item $\lambda_i +1$ and $\lambda_j +1$ have a common factor $m > 1$, and $m \nmid \lambda_k$, for distinct $i, j,$ and $k$.
    \item $n^{(t_i)} > \lambda_i$ for some $i$, where $t_i = \lambda_i + \cdots + \lambda_d$ and $n^{(t_i)}$ denotes the residue of $n$ modulo $t_i$.
\end{enumerate} 
\end{coro}
\begin{proof}
    We briefly explain the application of Theorem \ref{Th: mod-test} that results in each item.
    
    For item 1, apply Theorem \ref{Th: mod-test} with $m = \lambda_i + 1$. Then $1 + \lambda_1^{(m)} + \cdots + \lambda_d^{(m)} \geq 1 + \lambda_i + \lambda_{i+1} + \cdots + \lambda_{d} \geq 2m$. 
    
    For item 2, apply Theorem \ref{Th: mod-test} with the given $m$ and note that $1 + \lambda_1^{(m)} + \cdots + \lambda_d^{(m)} \geq 2m$.
    
    For item 3, apply Theorem \ref{Th: mod-test} with the given $m$ and note that $1 + \lambda_1^{(m)} + \cdots + \lambda_d^{(m)} \geq m + 1$, so neither condition in Theorem \ref{Th: mod-test} can hold as $n^{(m)} = 0$.
    
    For item 4, apply Theorem \ref{Th: mod-test} with the given $m$. Then, $1 + \lambda_1^{(m)} + \cdots + \lambda_d^{(m)} \geq 1 + \lambda_i + \cdots + \lambda_d \geq m+1$, and neither condition in Theorem \ref{Th: mod-test} can hold as $n^{(m)} = 0$.
    
    For item 5, apply Theorem \ref{Th: mod-test} with the given $m$. Then, $\lambda_i^{(m)} = m-1$, $\lambda_j^{(m)} = m-1$, $\lambda_k^{(m)} \geq 1$, and $1 + \lambda_1^{(m)} + \cdots + \lambda_d^{(m)} \geq 2m$.
    
    For item 6, apply Theorem \ref{Th: mod-test} with $m = t_i = \lambda_i + \cdots + \lambda_d$. First note that $i \neq 1$, as otherwise, $n^{(t_i)} = 1 \leq \lambda_1$. Thus, $i \geq 2$ and
    \begin{equation*}
        1+\lambda_1^{(t_i)}+\cdots + \lambda_d^{(t_i)} = 1 + \lambda_1^{(t_i)} + \cdots + \lambda_{i-1}^{(t_i)} + t_i > n^{(t_i)}.
    \end{equation*}
    The only way for $S$ satisfy the conditions in Theorem~\ref{Th: mod-test} is to have $1 + \lambda_1^{(t_i)} + \cdots + \lambda_{i-1}^{(t_i)} + t_i =  n^{(t_i)} + t_i$ and $\lambda_j^{(t_i)} \geq n^{(t_i)}$ for some $1 \leq j \leq d$. By the assumption that $n^{(t_i)} > \lambda_i$, the latter inequality can only be true for $1 \leq j \leq i-1$, but if this is the case then the former equality cannot be true.
\end{proof}
While Theorem~\ref{Th: mod-test} and Corollary~\ref{Cor: var cond} give many simple conditions that show a spider is not $e$\nobreakdash-positive, we note that there are arbitrarily large spiders and spiders with arbitrarily many legs, for which these conditions are not sufficient. For example, one can check that the spider $S(k,6,2,1)$ satisfies the conditions of Theorem~\ref{Th: mod-test} with every positive integer $m$ as long as $k$ is a positive integer divisible by $10!$. More generally, if $S(\lambda_1, \lambda_2, \ldots, \lambda_d)$ is an $n$-vertex spider that satisfies Theorem~\ref{Th: mod-test} with every $m$, it is not hard to see that $S(k, \lambda_1, \lambda_2, \ldots, \lambda_d)$ does so as well as long as $k$ is divisible by $n!$.

It follows that we need a different condition for $e$\nobreakdash-positivity to obtain our main result.

\begin{theo} \label{Th: qm-test}
Let $S = S(\lambda_1, \ldots, \lambda_d)$ be an $n$-vertex spider. For some $\lambda_i$ such that $2 \leq i < d$, let $a = \ceil{\frac{\lambda_i + 1}{m}}$ where $m$ is a positive integer, $n = qa + r$ where $0 \leq r < a$, $r = qd' + r'$ where $0 \leq r' < q$, and $t = \lambda_{i+1} + \cdots + \lambda_d$. If $q = \floor*{\frac{n}{a}}> \frac{m(a-1)}{t - 2m + 1}$, $t > 2m-1$, and $a > \lambda_{i+1}$, then $S$ is missing a connected partition of some type and is not $e$\nobreakdash-positive.
\end{theo}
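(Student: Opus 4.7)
The plan is to assume for contradiction that $S$ has a connected partition $C$ of the claimed type and derive an arithmetic contradiction with the identity $qd'+r'=r<a$, following the same general strategy as the proofs of Theorem~\ref{Th: mod-test} and Theorem~\ref{Dahl-q}. Set $s:=a+d'$, so the parts of $C$ have sizes $s$ or $s+1$. Exactly one part $P_0$ contains the center $v$; write $|P_0|=s+\epsilon$ for some $\epsilon\in\{0,1\}$, and let $N_1:=r'-\epsilon$ count the non-$P_0$ parts (henceforth \emph{subpaths}) of size $s+1$. Each subpath lives inside a single leg and has size at least $a>\lambda_{i+1}\ge\cdots\ge\lambda_d$, so the short legs $i+1,\ldots,d$ are absorbed entirely into $P_0$. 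For each long leg $j\le i$, let $c_j$ denote the number of its vertices inside $P_0$ (a prefix from $v$) and $k_j$ the number of subpaths it supports; then $\lambda_j-c_j=k_js+m_j$ for some $m_j\in[0,k_j]$, with $\sum_{j\le i}k_j=q-1$, $\sum_{j\le i}m_j=N_1$, and $|P_0|=1+t+\sum_{j\le i}c_j$.

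From $a=\lceil(\lambda_i+1)/m\rceil$ one has $m(a-1)\le\lambda_i\le ma-1$. The upper bound together with $s\ge a$ gives $k_is\le\lambda_i\le ma-1<ms$, so $k_i\le m-1$. Next, counting vertices on legs $1,\ldots,i-1$ contained in subpaths: there are $q-1-k_i$ such subpaths, exactly $N_1-m_i$ of them of size $s+1$ and the rest of size $s$, so their total size is $(q-1-k_i)s+(N_1-m_i)$; this is bounded above by $\sum_{j<i}\lambda_j=n-1-t-\lambda_i$ because $c_j\ge 0$. Substituting $n=qs+r'$ and $N_1=r'-\epsilon$ rearranges the inequality to
\begin{equation*}
\lambda_i+t\le s(k_i+1)+\epsilon-1+m_i,
\end{equation*}
whose right-hand side, maximized over $k_i\le m-1$, $m_i\le k_i$, and $\epsilon\le 1$, equals $sm+m-1$. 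Hence $\lambda_i+t\le sm+m-1$.

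Combining this with $\lambda_i\ge m(a-1)$ produces $t\le m(s-a)+2m-1=md'+2m-1$, i.e.\ $t-2m+1\le md'$. Multiplying by $q$ and invoking the hypothesis $q(t-2m+1)>m(a-1)$ yields $qmd'\ge q(t-2m+1)>m(a-1)$, so $qd'>a-1$ and therefore $qd'\ge a$. But $qd'\le qd'+r'=r<a$, forcing $qd'\le a-1$, a contradiction. Thus no such partition of $S$ exists, and $S$ is not $e$-positive by Lemma~\ref{Wolf}.

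The main delicate point I anticipate is the counting step leading to $\lambda_i+t\le sm+m-1$: one has to track $\epsilon$, the choice of $m_i\le k_i$, and the bound $k_i\le m-1$ simultaneously, and verify that even at the extremal choice $(k_i,m_i,\epsilon)=(m-1,m-1,1)$ the inequality still yields a bound compatible with the final contradiction (the degenerate case $k_i=0$, in which leg $i$ sits entirely inside $P_0$, is subsumed automatically). It is also worth pinning down where each hypothesis is used: $t>2m-1$ is exactly what keeps $t-2m+1$ positive so that $q>m(a-1)/(t-2m+1)$ is meaningful and yields the strict inequality $qd'>a-1$, while $a>\lambda_{i+1}$ is used precisely to ensure that the short legs contribute no subpaths and thus that $\sum_{j\le i}k_j$ accounts for all $q-1$ non-center parts.
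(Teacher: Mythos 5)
Your proposal is correct and follows essentially the same strategy as the paper's proof: both hinge on showing that leg $i$ can meet at most $m-1$ parts besides the center's part (since $ms\ge ma>\lambda_i$), so that the parts covering the center and legs $i,\ldots,d$ have total capacity at most $m(a+d'+1)$, which the hypotheses on $q$, $t$, and $a$ show is less than $1+\lambda_i+t$. Your bookkeeping with $c_j,k_j,m_j,\epsilon$ is more explicit and your contradiction is routed through $qd'\ge a$ versus $qd'\le r<a$ rather than a direct size comparison, but the content is identical.
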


\begin{proof}
    We show that $S$ is missing a connected partition of type $(a+ d' + 1)^{r'}(a+ d')^{q-r'}$.

    Suppose otherwise, that $S$ does contain a connected partition, $C$, of this type. Since $a + d' \geq a > \lambda_{i+1}, \ldots, \lambda_d$, one set in $C$ must contain the center along with each of the legs of lengths $\lambda_{i+1}, \ldots, \lambda_d$. Consider the number of sets required to contain all of the vertices from the leg of length $\lambda_i$. Suppose $k$ sets are used. Then only one set can contain the center and vertices from other legs and the remaining $k-1$ sets contain at most $\lambda_i$ vertices collectively, so $(k-1)(a+d') \leq \lambda_i$ vertices. Notice that $m(a+d') \geq ma > \lambda_i$, so $k-1 < m$ and $k \leq m$ with equality only if one of the $k$ sets contains the center.

    Putting the preceding results together, it follows that the legs of lengths $\lambda_i, \lambda_{i+1}, \ldots, \lambda_d$ are contained in the union of at most $m$ sets. Hence for some $V_1, \ldots, V_m \in C$, $|V_1| + \cdots + |V_m| \geq 1 + \lambda_i + \cdots +\lambda_d = 1 + \lambda_i + t$. However, we claim that this value is greater than $m(a+d'+1)$. Indeed,
    \begin{equation*}
        m(a+d'+1) = ma + m + m\frac{r-r'}{q} \leq ma + m + \frac{m(a-1)}{q}.
    \end{equation*}

    By assumption $\frac{m(a-1)}{q} < t-2m+1$, and $a < \frac{\lambda_i+1}{m} + 1$. The second inequality implies $ma \leq \lambda_i + m$ as $ma$ is an integer. It follows that, 
    \begin{equation*}
        m(a+d'+1) < \lambda_i +m + m + t-2m+1 = 1+ \lambda_i + t,
    \end{equation*}
 and the legs of lengths  $\lambda_i, \lambda_{i+1}, \ldots, \lambda_d$ cannot be contained in the union of at most $m$ sets. As a result, $S$ cannot have a connected partition of the stated type. 
\end{proof}

Theorem \ref{Dahl-q}, restated from \cite{DSvW}, is a special case of Theorem \ref{Th: qm-test} when $m = 1$. In Example~\ref{ex: spider th 3.6} we give a spider where Theorem \ref{Th: qm-test} shows it is not $e$\nobreakdash-positive while Theorem \ref{Dahl-q} is not sufficient.
\begin{ex} \label{ex: spider th 3.6}
Let $S = S(448, 276, 90, 1, 1)$ be a spider with $817$ vertices. Then it is easy to see that $S$ does not satisfy the conditions of Theorem \ref{Dahl-q}. However, take $m = 3$, $a = \ceil*{\frac{277}{3}} = 93$, and $t = 90 + 1 + 1 = 92$. Then indeed $t > 2m - 1$, $a > 90$, and $q = \floor*{\frac{817}{93}} = 8 > \frac{3(93 - 1)}{92 - 6 + 1} = \frac{92}{29}$, and $S$ is missing a connected partition of type $(103, 102^7)$.
\end{ex}
 
Theorems \ref{Dahl-q} and \ref{Th: qm-test} also imply a sort of geometric progression in the legs of any $e$\nobreakdash-positive spider as demonstrated by the following corollary. 

\begin{coro} \label{Cor: sqrt bd}
If $S = S(\lambda_1, \ldots, \lambda_d)$ is spider with $d \geq 5$ and has a connected partition of every type, then $\lambda_i+1 > \sqrt{\frac{n}{2}(\lambda_{i+1}+1)}$ for $2 \leq i \leq d-2$. If $i > 2$ then this can be improved to $\lambda_i > \sqrt{\frac{n}{2}\lambda_{i+1}}$.
\end{coro}
\begin{proof}
    This is a straightforward application of Theorem \ref{Dahl-q}. Since $S$ is has a connected partition of every type, Theorem \ref{Dahl-q} implies that $q = \floor{\frac{n}{\lambda_i+1}} < \frac{\lambda_i+1}{t-1}$, where $t = \lambda_{i+1} + \cdots + \lambda_{d} \geq \lambda_{i+1} + d - i$. By assumption, $i \leq d-2$, so $t-1 \geq \lambda_{i+1} + 1$. Finally, since $n > \lambda_i + 1$, it follows that $\floor{\frac{n}{\lambda_i+1}} > \frac{n}{2(\lambda_i+1)}$. Altogether this shows,
    \begin{equation*}
        \frac{n}{2(\lambda_i+1)} < \floor*{\frac{n}{\lambda_i+1}} < \frac{\lambda_i+1}{t-1} \leq \frac{\lambda_i+1}{\lambda_{i+1}+1} \implies \lambda_i+1 > \sqrt{\frac{n}{2}(\lambda_{i+1}+1)}.
    \end{equation*}
    
    For the second part, if $i > 2$, the first part of Corollary \ref{Cor: var cond} implies that $\lambda_i \geq \lambda_{d-2} \geq 2$ and $n > \lambda_1 + \lambda_2 + \lambda_i \geq 4\lambda_i$. If, $\lambda_i \geq 3$, then it follows that $\frac{n}{\lambda_i} > 4 \geq 2\frac{\lambda_i+1}{\lambda_i - 1}$. If $\lambda_i = 2$, then applying the first part of Corollary~\ref{Cor: var cond} twice yields $\lambda_2 \geq 4$ and $\lambda_1 \geq 8$, and in particular, $n \geq 17 > 2\frac{\lambda_i+1}{\lambda_i - 1}$. Either way, we may conclude that $n > 2\frac{\lambda_i+1}{\lambda_i - 1}$ and as a result,
    \begin{equation*}
        \frac{n}{2\lambda_i} \leq  \frac{n}{\lambda_i+1} - 1 < \floor*{\frac{n}{\lambda_i+1}} \leq \frac{\lambda_i}{t-1} < \frac{\lambda_i}{\lambda_{i+1}},
    \end{equation*}    
    where the third inequality is due to Theorem \ref{Th: qm-test} with $m = 1$. As a result,
    \begin{equation*}
    \lambda_i > \sqrt{\frac{n}{2}\lambda_{i+1}}.
    \end{equation*}
\end{proof}

Using Corollary \ref{Cor: sqrt bd} we improve the bound of Theorem \ref{dahl bd}, restated from \cite{DSvW}, and show that spiders with at least six legs are not $e$\nobreakdash-positive.

\begin{theo} \label{new bd}
Let $S = S(\lambda_1, \ldots, \lambda_d)$ be an $n$-vertex spider, with $d \geq 5$. If $S$ has a connected partition of every type then $(n/2)^{-1/2}+\cdots+(n/2)^{-1/2^{d-3}} < 1$.
\end{theo}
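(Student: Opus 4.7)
The plan is to iterate the two bounds from Corollary \ref{Cor: sqrt bd} down the chain of leg lengths, and then compare the accumulated lower bounds against the ceiling $\lambda_2 + \cdots + \lambda_d \leq n/2$ coming from part (1) of Corollary \ref{Cor: var cond}. Write $N = n/2$ for brevity.

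First I would iterate the stronger inequality $\lambda_i > \sqrt{N\lambda_{i+1}}$, valid for $2 < i \leq d-2$, starting from the trivial bound $\lambda_{d-1} \geq 1$. This gives $\lambda_{d-2} > N^{1/2}$, then $\lambda_{d-3} > \sqrt{N \cdot N^{1/2}} = N^{3/4}$, and a simple induction yields
\begin{equation*}
\lambda_{d-k} > N^{1 - 1/2^{k-1}} \qquad \text{for } k = 2, 3, \ldots, d-3,
\end{equation*}
so that in particular $\lambda_3 > N^{1 - 1/2^{d-4}}$. Then I would apply the weaker inequality $\lambda_2 + 1 > \sqrt{N(\lambda_3+1)}$ (the $i = 2$ case of the first part of Corollary \ref{Cor: sqrt bd}) to conclude $\lambda_2 + 1 > \sqrt{N \cdot N^{1 - 1/2^{d-4}}} = N^{1 - 1/2^{d-3}}$.

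To close the argument, note that by part (1) of Corollary \ref{Cor: var cond} the $e$-positivity of $S$ forces $\lambda_1 \geq n/2$, hence $\lambda_2 + \cdots + \lambda_d \leq N$. Using $\lambda_{d-1}, \lambda_d \geq 1$, this sharpens to $\lambda_2 + \cdots + \lambda_{d-2} \leq N - 2$. Substituting the lower bounds from the two previous steps produces
\begin{equation*}
N^{1 - 1/2^{d-3}} + N^{1 - 1/2^{d-4}} + \cdots + N^{1 - 1/2} < N - 1,
\end{equation*}
and dividing through by $N$ gives the claimed sum $\sum_{k=1}^{d-3} N^{-1/2^k} < 1 - 1/N < 1$.

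The argument uses no new combinatorial input beyond Corollaries \ref{Cor: sqrt bd} and \ref{Cor: var cond}, so the main obstacle is really bookkeeping: ensuring that the exponents from the iteration line up exactly with the sequence $-1/2, -1/4, \ldots, -1/2^{d-3}$; respecting the restriction $2 < i \leq d-2$ on the strong bound (in particular handling the degenerate case $d = 5$, where the chain collapses to the single step $\lambda_3 > N^{1/2}$); and tracking the harmless $\pm 1$ shifts that arise because the bound on $\lambda_2$ is phrased with $\lambda_2 + 1$ rather than $\lambda_2$.
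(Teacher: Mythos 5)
Your proposal is correct and follows essentially the same route as the paper: iterate Corollary \ref{Cor: sqrt bd} up the chain of legs to get $\lambda_{d-k} \gtrsim (n/2)^{1-1/2^{k-1}}$, then sum these lower bounds against the bound $\lambda_2+\cdots+\lambda_d \le n/2$ from part (1) of Corollary \ref{Cor: var cond} and divide by $n/2$. Your version is, if anything, slightly more careful about the $+1$ shifts and about which of the two inequalities in Corollary \ref{Cor: sqrt bd} applies at each index (including the degenerate case $d=5$), but there is no substantive difference.
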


\begin{proof}
    Suppose $S$ is $e$\nobreakdash-positive. Applying Corollary \ref{Cor: sqrt bd} yields the following inequalities,
    \begin{align*}
        \lambda_{d-2} &> (n/2)^{1/2}, \\
        \lambda_{d-3} &> (n/2)^{1/2}\lambda_{d-2}^{1/2} > (n/2)^{1-1/2^2}, \\
        &\vdots \\
        \lambda_3 &> (n/2)^{1-1/2^{d-4}}, \\
        \lambda_2 + 1 &> (n/2)^{1/2}(\lambda_{3}+1)^{1/2} > (n/2)^{1/2}(\lambda_{3})^{1/2}  > (n/2)^{1-1/2^{d-3}}.
    \end{align*}
    
Adding these together and applying the first part of Corollary \ref{Cor: var cond} yields, 
\begin{equation*}
    1 + \lambda_{d-2}+ \cdots + \lambda_{2} < n/2 \implies (n/2)^{1/2} + \cdots + (n/2)^{1-1/2^{d-3}} < n/2,
\end{equation*}
which can be simplified to
\begin{equation*}
    (n/2)^{-1/2}+\cdots+(n/2)^{-1/2^{d-3}} < 1.
\end{equation*}
\end{proof}

\begin{theo} \label{new result}
If $S = S(\lambda_1 , \ldots, \lambda_d)$ is a spider with $d \geq 6$ legs, then $S$ is missing a connected partition of some type and is not $e$\nobreakdash-positive.
\end{theo}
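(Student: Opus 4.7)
The plan is proof by contradiction. Assume $S = S(\lambda_1, \ldots, \lambda_d)$ with $d \geq 6$ legs is $e$-positive. From the first part of Corollary \ref{Cor: var cond} we have $\lambda_1 \geq \lambda_2 + \cdots + \lambda_d$, and by iterating Corollary \ref{Cor: sqrt bd} we obtain the tower $\lambda_{d-k} > (n/2)^{1 - 1/2^{k-1}}$ for $2 \leq k \leq d-3$, together with $\lambda_2 + 1 > (n/2)^{1 - 1/2^{d-3}}$. Thus an $e$-positive spider with $d \geq 6$ must have a rigid shape: $\lambda_1$ is close to $n/2$, and $\lambda_2, \ldots, \lambda_{d-2}$ form a rapidly decreasing tower of polynomial-in-$n$ sizes.

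For $n$ below an explicit threshold, this structure already violates Theorem \ref{new bd}: summing the tower and using $\lambda_1 + \cdots + \lambda_d = n - 1$ forces $\sum_{k=1}^{d-3}(n/2)^{-1/2^k} \geq 1$. For $d = 6$, the threshold is approximately $n = 178$; analogous (larger) thresholds appear for each $d \geq 6$. This handles the small-$n$ range directly.

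For $n$ above the threshold, I apply Theorem \ref{Th: qm-test} with $i = 2$ and a carefully chosen $m > 1$ to exhibit a missing connected partition. Because the tower forces $\lambda_2$ to be close to $n/2$ while $\lambda_3$ is at most $(n/2)^{3/4}$ up to constants, there is a sizeable gap that lets us pick $m$ so that $a = \lceil (\lambda_2 + 1)/m \rceil$ satisfies $a > \lambda_3$, $t = \lambda_3 + \cdots + \lambda_d > 2m - 1$, and $\lfloor n/a \rfloor > m(a-1)/(t - 2m + 1)$. Theorem \ref{Th: qm-test} then produces a missing connected partition of type $(a + d' + 1)^{r'}(a + d')^{q - r'}$, contradicting $e$-positivity and establishing the theorem.

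The main obstacle is the uniform verification in this last step: the admissible window for $m$, bounded above by $m < (t+1)/2$ and below by the progression condition on $q$, must be shown to be nonempty for every $n$ above the threshold and every $d \geq 6$. Making this precise requires combining the tower from Corollary \ref{Cor: sqrt bd} with careful treatment of the integer rounding in $a = \lceil (\lambda_2 + 1)/m \rceil$, and matching the cases $d = 6$, $d = 7$, and $d \geq 8$ to the appropriate place in the tower.
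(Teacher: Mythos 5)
Your outline follows the paper's strategy --- contradiction, the tower from Corollary \ref{Cor: sqrt bd}, largeness of $n$ from Theorem \ref{new bd}, and then Theorem \ref{Th: qm-test} at $i=2$ --- but the step you yourself flag as ``the main obstacle'' is precisely the content of the proof, and you have not supplied it. Worse, the heuristic you offer for why an admissible $m$ exists does not hold up: Corollary \ref{Cor: sqrt bd} gives only \emph{lower} bounds, so you have no upper bound of the form $\lambda_3 \lesssim (n/2)^{3/4}$ and no guarantee that $\lambda_2$ is ``close to $n/2$'' (only $\lambda_1 \geq \floor{n/2}$ is forced). In particular $\lambda_2$ and $\lambda_3$ may be nearly equal, in which case no $m>1$ can satisfy $a=\ceil{(\lambda_2+1)/m}>\lambda_3$, so your insistence on $m>1$ would leave that case uncovered. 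The ``sizeable gap'' you rely on need not exist.

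The missing idea is the explicit choice $m=\floor*{\frac{\lambda_2+1}{\lambda_3+1}}$ (which is allowed to equal $1$). This pins $a=\ceil{\frac{\lambda_2+1}{m}}$ into the window $\lambda_3 < a \leq 2(\lambda_3+1)$, and then every hypothesis of Theorem \ref{Th: qm-test} is verified uniformly in $d\geq 6$ with no case split and no nonemptiness argument for a window of $m$'s: one gets $m(a-1)\leq\lambda_2$, hence $\frac{m(a-1)}{t-2m+1}<\frac{\lambda_2}{\lambda_3+1}$ once $t-2m+1>\lambda_3+1$ (which follows from $m<(n/2)^{1/4}$, $\lambda_4>(n/2)^{1/2}$, and $n$ large via Theorem \ref{new bd}); and $\floor*{\frac{n}{a}}\geq\frac{n}{2(\lambda_3+1)}-1>\frac{\lambda_2}{\lambda_3+1}$ from $\lambda_2+t\leq n/2$. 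Your proposed split into a small-$n$ regime and a large-$n$ regime, and into cases $d=6,7,\geq 8$, is unnecessary once this choice is made; without it, the proposal does not constitute a proof.
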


\begin{proof}
    Set $m = \floor*{\frac{\lambda_2 + 1}{\lambda_3 + 1}}$, $a=\ceil{\frac{\lambda_2+1}{m}}$, and $t = \lambda_3 + \cdots + \lambda_d$.  If $S$ has a connected partition of every type, then recall that by the first part of Corollary \ref{Cor: var cond}, the longest leg of $S$ has at least $\floor*{\frac{n}{2}}$ vertices, and as a result $\lambda_2 + t \leq n/2$ and in particular $n/2 > n/2 - (\lambda_3 + 1) >\lambda_2 + 1$. Also, by Corollary \ref{Cor: sqrt bd},  $\lambda_4 > (n/2)^{1/2}$ and $\lambda_3 > (n/2)^{3/4}$. These inequalities imply that $m \leq \frac{\lambda_2 + 1}{\lambda_3 + 1} < \frac{n/2}{(n/2)^{3/4}} = (n/2)^{1/4}$. It is easy to see that by the bound in Theorem \ref{new bd}, if $S$ has at least six legs and is $e$\nobreakdash-positive, then $n$ must be large enough so that $2(n/2)^{1/4} < (n/2)^{1/2} < \lambda_4$, and as a result, $t - 2m + 1 > \lambda_3 + 1$. 
    
    Finally, note that $m(a-1) = ma - m \leq \lambda_2 + m - m = \lambda_2$, which, with the previous inequality, yields
    \begin{equation*}
        \frac{m(a-1)}{t-2m+1} < \frac{\lambda_2}{\lambda_3 + 1}.
    \end{equation*}
    
    However, we claim that this violates the conditions of Theorem \ref{Th: qm-test} with $\lambda_2, m, a,$ and $t$ as defined above, contradicting the $e$\nobreakdash-positivity of $S$ (or more specifically that $S$ has a connected partition of every type).
    
    Indeed, $\lambda_2 + 1 \geq \lambda_3 + 1$ implies that $m > \frac{\lambda_2 + 1}{2(\lambda_3 + 1)}$, so $a < \frac{\lambda_2 + 1}{m} + 1 < 2(\lambda_3+1) + 1$. As $a$ is an integer, this means that $a \leq 2(\lambda_3 + 1)$. Thus,  
    
    \begin{equation*}
        \floor*{\frac{n}{a}} > \frac{n}{a} - 1 \geq \frac{n}{2(\lambda_3+1)} - 1= \frac{n/2 - (\lambda_3+1)}{\lambda_3+1} > \frac{\lambda_2}{\lambda_3 + 1},
    \end{equation*}
    which implies $\floor*{\frac{n}{a}} > \frac{m(a-1)}{t-2m+1}$. It remains to check that $t > 2m-1$ and $ a > \lambda_{3}$. The first inequality is true as $t - 2m + 1 > \lambda_3 + 1 > 0$, and the second is true as $a \geq \frac{\lambda_2 + 1}{m} \geq \frac{\lambda_2+1}{(\lambda_2+1)/(\lambda_3+1)} > \lambda_3$. Therefore any spider with at least six legs is missing a connected partition of some type and is not $e$\nobreakdash-positive.
\end{proof}

Finally, all of the results in this section can be extended to trees and graphs with cut vertices by applying \cite[Lemmas 12 and 13]{DSvW}. In particular, combining \cite[Lemmas 12 and 13]{DSvW} with Theorem~\ref{new result} yields Theorems~\ref{new result trees} and \ref{new result cut}. We restate \cite[Lemma 12]{DSvW} regarding trees below. The other lemma regarding cut vertices is similar.

\begin{lemma} \label{lm: dahl trees} (Dahlberg, She, and van Willigenburg \cite{DSvW})
Let $T$ be a tree with a vertex of degree $d \geq 3$, and let $v$ be any such vertex. Let $t_1, \ldots, t_d$ denote number of vertices of the subtrees $T_1, \ldots, T_d$ rooted at the $d$ vertices adjacent to $v$ respectively. If $T$ has a connected partition of type $\lambda$, then the spider $S(t_1, \ldots, t_d)$ has a connected partition of type $\lambda$ as well.
\end{lemma}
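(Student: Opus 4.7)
The plan is to transport a given connected partition of $T$ to a connected partition of $S(t_1, \ldots, t_d)$ of the same type, using the key fact that restrictions of a connected partition to subtrees behave well because $T$ is a tree.

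First I would let $C = \{V_1, \ldots, V_k\}$ be a connected partition of $T$ of type $\lambda$, and denote by $V \in C$ the unique set containing $v$. For each $i$, I would show that $V \cap T_i$ is either empty or a connected subtree of $T_i$ containing the root $r_i$ (the vertex of $T_i$ adjacent to $v$). This is because the unique path in $T$ from any $u \in V \cap T_i$ to $v$ must lie inside $V$ (as $V$ is connected in a tree), and that path passes through $r_i$. Similarly, any other part $U \in C \setminus \{V\}$ must be contained entirely in a single $T_i$: if $U$ had vertices in two subtrees $T_i \neq T_j$, the path joining them in $T$ would have to pass through $v \notin U$, contradicting connectedness of $U$. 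Setting $a_i = |V \cap T_i|$ (possibly zero), the parts of $C$ other than $V$ therefore partition $T_i \setminus (V \cap T_i)$ into connected subsets for each $i$, with total size $t_i - a_i$.

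Next I would transfer this data to the spider. Let $v'$ denote the center of $S(t_1, \ldots, t_d)$, and let $P_i$ be its $i$-th leg. Define
\[
V' = \{v'\} \cup \bigcup_{i=1}^{d} \{\text{the first } a_i \text{ vertices of } P_i \text{ starting from } v'\},
\]
which is connected in the spider and satisfies $|V'| = 1 + \sum_i a_i = |V|$. On each leg $P_i$, removing these first $a_i$ vertices leaves a subpath on $t_i - a_i$ vertices; I would partition this subpath into consecutive segments whose sizes form exactly the multiset of sizes of the parts of $C$ contained in $T_i \setminus (V \cap T_i)$. Since any path may be cut into consecutive subpaths of any prescribed sizes summing to its length, and each such segment is automatically connected, this gives a well-defined connected partition of $S(t_1,\ldots,t_d)$. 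Its multiset of part sizes matches that of $C$, so it has type $\lambda$.

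The main obstacle is really just the structural claim in the first step about restrictions of parts to subtrees; once that is in hand, the leg of the spider is flexible enough to absorb whatever part-size multiset arose inside each $T_i$, simply because consecutive segments of a path may be chosen of arbitrary sizes. No subtlety involving specific leg lengths or ordering is needed, so the construction works verbatim for any tree $T$ with a vertex of degree $d \geq 3$.
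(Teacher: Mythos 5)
Your proof is correct: the restriction argument (the part containing $v$ meets each subtree $T_i$ in a connected set containing its root, every other part lies wholly inside a single $T_i$, and each leg of the spider can then be cut into consecutive segments realizing the same part sizes) is exactly the right idea and is carried out without gaps. Note that the paper itself states this lemma without proof, citing Dahlberg et al.\ \cite{DSvW}; your argument is the natural one and matches the approach taken in that reference, so there is nothing further to compare.
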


\begin{theo} \label{new result trees}
Any tree with a vertex of degree at least six is not $e$\nobreakdash-positive.
\end{theo}

\begin{theo}\label{new result cut}
Any graph with a cut vertex whose deletion produces a graph with at least six connected components is not $e$\nobreakdash-positive.
\end{theo}

Theorem \ref{new result trees} resolves a conjecture of Dahlberg, She, and van Willigenburg for $d \geq 6$ \cite{DSvW}. Regarding the tightness of this result, it is mentioned in \cite{DSvW} that the spider $S(6,4,1,1)$ has a connected partition of every type. Therefore, only the remaining $d = 5$ case in Theorem~\ref{new result trees} can be resolved via the method of reducing to spiders and showing that a connected partition of some type is missing. We are unaware of any spiders with five legs that have a connected partition of every type. 

For the $d=4$ case, we were able to find a number of spiders with four legs, in addition to  $S(6,4,1,1)$, that have a connected partition of every type. We discuss these spiders in the next section and show that they are not $e$\nobreakdash-positive. However, our result will proceed by directly calculating $e$\nobreakdash-coefficients, so it does not tell us anything about the trees corresponding to these spiders as we cannot invoke Lemma~\ref{lm: dahl trees}.
\section{Spiders with few legs} \label{sec: small spiders}

The aim of this section, and the next, is to capitalize on the simple structure of spiders and better understand their $e$\nobreakdash-positivity through direct calculations of certain $e$\nobreakdash-coefficients. We focus on spiders with four legs in this section and turn to spiders with more legs in the next. 

While we previously gave many conditions that must be satisfied for a spider to be $e$\nobreakdash-positive, we find that there are still many spiders with fewer than six legs that pass all of the tests. For example, using a brute force search we discovered a number of four legged spiders of the form $S(a, b, 2, 1)$ that satisfy the conditions for $e$\nobreakdash-positivity of the previous section. For the spiders $S(15,12,2,1),$ $S(16,12,2,1),$ and $S(21, 12, 2, 1)$, we were able to further verify that each indeed has a connected partition of every type. Thus, to show that these spiders are not $e$\nobreakdash-positive we must resort to calculating $e$\nobreakdash-coefficients.

In Theorem~\ref{cor: 4 leg q}, we will show that no spider of the form $S(a, b, 2, 1)$, is $e$\nobreakdash-positive. Spiders of the form $(a,b,1,1)$ with connected partitions of every type were noted in \cite{DSvW} and shown to be, in general, not $e$\nobreakdash-positive. 

Our main tool for calculating $e$\nobreakdash-coefficients will be the decomposition shown in Lemma~\ref{lm: spi decomp}, and an expression for the $e$\nobreakdash-coefficients of chromatic symmetric functions of paths \cite{Wolfe}. The decomposition is obtained by repeatedly applying the Triple-Deletion rule \cite{OS} and a trivial property of chromatic symmetric functions of graphs with disjoint subgraphs \cite{S95}. These four lemmas are stated next

\begin{lemma} [\!\! Stanley \cite{S95}]\label{lm: disj}
For disjoint graphs $G$ and $H$, $X_{G \cup H} = X_G X_H$.
\end{lemma}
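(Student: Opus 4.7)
The plan is to unpack the definition of the chromatic symmetric function and exploit the absence of edges between $G$ and $H$. First I would fix vertex labellings $V_G = \{v_1, \ldots, v_m\}$ and $V_H = \{u_1, \ldots, u_n\}$ so that $V_{G \cup H} = V_G \sqcup V_H$, and note that the edge set of $G \cup H$ is $E_G \sqcup E_H$ with no edge joining a vertex of $G$ to a vertex of $H$.

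The central observation is then the bijection
\begin{equation*}
\{\text{proper colorings } \kappa \text{ of } G \cup H\} \longleftrightarrow \{\text{proper colorings } \kappa_G \text{ of } G\} \times \{\text{proper colorings } \kappa_H \text{ of } H\},
\end{equation*}
given by $\kappa \mapsto (\kappa|_{V_G}, \kappa|_{V_H})$. The forward map is well-defined because every edge of $G \cup H$ lies entirely in $G$ or entirely in $H$, so properness on each part implies properness on the union. The inverse glues any pair $(\kappa_G, \kappa_H)$ into a single map $\kappa \colon V_{G \cup H} \to \{1, 2, \ldots\}$, which is proper for the same reason: there are no cross edges to violate.

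Next I would factor the monomial attached to each coloring. Since $V_{G \cup H}$ is the disjoint union of $V_G$ and $V_H$,
\begin{equation*}
x_{\kappa(v_1)} \cdots x_{\kappa(v_m)} x_{\kappa(u_1)} \cdots x_{\kappa(u_n)} = \bigl(x_{\kappa_G(v_1)} \cdots x_{\kappa_G(v_m)}\bigr) \bigl(x_{\kappa_H(u_1)} \cdots x_{\kappa_H(u_n)}\bigr).
\end{equation*}
Summing over all proper colorings of $G \cup H$ and applying the bijection,
\begin{equation*}
X_{G \cup H} = \sum_{\kappa_G, \kappa_H} \Bigl(\prod_i x_{\kappa_G(v_i)}\Bigr)\Bigl(\prod_j x_{\kappa_H(u_j)}\Bigr) = \Bigl(\sum_{\kappa_G} \prod_i x_{\kappa_G(v_i)}\Bigr)\Bigl(\sum_{\kappa_H} \prod_j x_{\kappa_H(u_j)}\Bigr) = X_G X_H,
\end{equation*}
where the middle equality is the distributive law for (formal) power series. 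There is no real obstacle here: the statement is essentially an unpacking of the definition, and the only subtlety worth flagging is the convergence/absolute-summability issue that justifies separating the double sum, which is handled by working in the graded completion of the polynomial ring $\mathbb{R}[x_1, x_2, \ldots]$ where only finitely many monomials of each total degree appear.
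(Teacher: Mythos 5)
Your proof is correct and is the standard argument: the paper itself states this lemma as a cited result of Stanley without proof, and your bijection-of-colorings plus factor-the-monomial argument is exactly the canonical justification. Nothing is missing; the remark about working degree by degree in the graded ring suffices to justify interchanging the sum and product.
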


\begin{lemma} [\!\! Triple-Deletion Rule of Orellana and Scott \cite{OS}] \label{lm: tri del}
Let $G(V, E)$ be a graph with vertices $v, v_1, v_2$ where $e_1 = vv_1 \in E$, $e_2 = vv_2 \in E$ and $e_3 = v_1v_2 \notin E$, and let $S = \{e_1, e_2, e_3\}$. Let $G_{A} = G(V, (E - S) \cup A)$ where $A \subseteq S$. Then, 
\begin{equation*}
    X_{G} = X_{G_{\{e_2,e_3\}}} + X_{G_{\{e_1\}}} - X_{G_{\{e_3\}}}.
\end{equation*}
\end{lemma}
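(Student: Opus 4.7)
The plan is to prove the identity by expanding each chromatic symmetric function as a sum over proper colorings and verifying the relation monomial-by-monomial via a case analysis on the restriction of each coloring to the triple $\{v, v_1, v_2\}$. Recall that $X_H = \sum_\kappa \prod_{u \in V} x_{\kappa(u)}$, where the sum is over proper colorings $\kappa \colon V \to \{1, 2, \dots\}$. The four graphs $G$, $G_{\{e_2, e_3\}}$, $G_{\{e_1\}}$, $G_{\{e_3\}}$ agree on every edge outside $S = \{e_1, e_2, e_3\}$ and differ only in which subset of $S$ they contain; thus any coloring violating some edge of $E \setminus S$ contributes zero to every term of the claimed identity, and I may restrict attention to colorings proper for the common subgraph on edge set $E \setminus S$.

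For such a coloring, propriety with respect to each of the four graphs depends only on the three values $\kappa(v)$, $\kappa(v_1)$, $\kappa(v_2)$. I would split these colorings into five exhaustive cases: (A) all three colors equal; (B) $\kappa(v) = \kappa(v_1) \neq \kappa(v_2)$; (C) $\kappa(v) = \kappa(v_2) \neq \kappa(v_1)$; (D) $\kappa(v_1) = \kappa(v_2) \neq \kappa(v)$; and (E) all three distinct. In each case I would record whether $\kappa$ is proper for each of the four graphs, using that $G$ requires $\kappa(v) \neq \kappa(v_1)$ and $\kappa(v) \neq \kappa(v_2)$; that $G_{\{e_2, e_3\}}$ requires $\kappa(v) \neq \kappa(v_2)$ and $\kappa(v_1) \neq \kappa(v_2)$; that $G_{\{e_1\}}$ requires only $\kappa(v) \neq \kappa(v_1)$; and that $G_{\{e_3\}}$ requires only $\kappa(v_1) \neq \kappa(v_2)$.

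A direct check in each of cases (A)--(E) confirms the pointwise relation
\[
[\kappa \text{ proper for } G] = [\kappa \text{ proper for } G_{\{e_2,e_3\}}] + [\kappa \text{ proper for } G_{\{e_1\}}] - [\kappa \text{ proper for } G_{\{e_3\}}].
\]
For instance, in case (D) the left side equals $1$ while the right side is $0 + 1 - 0$, and in case (B) the left side equals $0$ while the right side is $1 + 0 - 1$; cases (A), (C), and (E) are equally immediate. Summing this identity, weighted by the monomial $\prod_{u \in V} x_{\kappa(u)}$, over all colorings $\kappa$ proper for $G(V, E \setminus S)$ yields the claimed equality of chromatic symmetric functions.

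Since the argument reduces to a linear identity among four indicator functions on the small set of patterns that three colors can realize at three vertices, there is no substantive obstacle. The only point requiring care is to confirm that the five-case split is exhaustive, which hinges on the observation that one cannot have exactly two of the three equalities $\kappa(v) = \kappa(v_1)$, $\kappa(v) = \kappa(v_2)$, $\kappa(v_1) = \kappa(v_2)$ without forcing the third by transitivity, leaving precisely the configurations (A)--(E).
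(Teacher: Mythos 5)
Your argument is correct and complete. The paper itself offers no proof of this lemma---it is quoted verbatim as a known result of Orellana and Scott \cite{OS}---so there is nothing in the source to compare against line by line; what you have written is a valid, self-contained, elementary verification, and it is essentially the standard way this identity is established (Orellana and Scott's own derivation likewise rests on classifying proper colorings by their restriction to the triangle $\{v, v_1, v_2\}$). Your reduction to colorings proper on $E \setminus S$ is sound because all four graphs share the vertex set and all edges outside $S$, so a violation there kills the monomial in every term simultaneously; your five-case split is exhaustive by the transitivity observation you make; and the indicator identity checks out in each case (for example, in case (C) one gets $0 = 0 + 1 - 1$ and in case (E) one gets $1 = 1 + 1 - 1$, consistent with the two cases you display). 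The one small thing worth making explicit if you were to write this up formally is that the monomial $\prod_{u \in V} x_{\kappa(u)}$ attached to a fixed $\kappa$ is identical across all four chromatic symmetric functions, which is what licenses summing the pointwise indicator identity; you implicitly use this and it is immediate from the definition.
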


Figure~\ref{spi dec fig} illustrates Lemma~\ref{lm: tri del} when applied to a spider.

\begin{lemma}  [\!\! Wolfe \cite{Wolfe}] \label{lm: Wolfe} 
Let $\lambda = (1^{a_1}, \ldots, n^{a_n})$ be a partition of $n$ and let $P_n$ be the path on $n$ vertices. Then the coefficient of $e_\lambda$ in the expansion of $X_{P_n}$ is given by

\begin{align*}
       [e_\lambda]X_{P_n} = &\binom{a_1+ \cdots + a_n}{a_1, \dots , a_n} \prod_{j = 1}^{n} (j-1)^{a_j} + \\
       & \sum_{i = 1}^{n}  \left( \binom{(a_1+ \cdots + a_n) - 1}{a_1, \dots , a_i-1, \dots , a_n} 
       \left( \prod_{j = 1, j \neq i}^{n} (j-1)^{a_j} \right) (i-1)^{a_i-1}
       \right).
\end{align*}
\end{lemma}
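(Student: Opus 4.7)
The plan is to compute $X_{P_n}$ first in the power-sum basis, then convert to the elementary basis via a generating-function identity. Applying Stanley's power-sum expansion $X_G = \sum_{S \subseteq E(G)}(-1)^{|S|} p_{\lambda(V,S)}$ to $G = P_n$ is particularly clean: edge subsets of $P_n$ biject with compositions of $n$, where a composition of length $\ell$ corresponds to omitting $\ell - 1$ edges and hence $|S| = n - \ell$. This yields
\begin{equation*}
X_{P_n} \;=\; \sum_{(c_1, \ldots, c_\ell) \models n}(-1)^{n-\ell}\, p_{c_1}\cdots p_{c_\ell}.
\end{equation*}

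Next I pass to the $e$-basis via generating functions. Setting $E(t) := \sum_{k \geq 0} e_k t^k$, the classical identity $\log E(t) = \sum_{k \geq 1}(-1)^{k-1} \frac{p_k}{k} t^k$ differentiates to $\frac{E'(t)}{E(t)} = \sum_{k \geq 1}(-1)^{k-1} p_k t^{k-1}$. Writing $\tilde p_k := (-1)^{k-1} p_k$, the sign $(-1)^{n-\ell}$ factors as $\prod_i (-1)^{c_i - 1}$, so the composition sum collapses into a geometric series
\begin{equation*}
\sum_{n \geq 1} X_{P_n}\, t^n \;=\; \frac{t\, E'(t)}{E(t) - t\, E'(t)} \;=\; \frac{\sum_{k \geq 1} k\, e_k t^k}{1 - \sum_{k \geq 2}(k-1)\, e_k t^k}.
\end{equation*}
Expanding the right-hand side and extracting $[t^n]$ gives
\begin{equation*}
X_{P_n} \;=\; \sum_{m \geq 0}\ \sum_{\substack{k_0 \geq 1,\; k_i \geq 2\ (i \geq 1)\\ k_0 + \cdots + k_m = n}} k_0 \prod_{i=1}^{m}(k_i - 1) \cdot e_{k_0} e_{k_1}\cdots e_{k_m}.
\end{equation*}

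Finally, extracting $[e_\lambda]$ from this expression amounts to summing the weight $k_0\prod_{i \geq 1}(k_i - 1)$ over all sequences $(k_0, \ldots, k_m)$ whose underlying multiset equals $\lambda$. For each choice of part $j$ of $\lambda$ placed at the distinguished position $0$, the remaining $\ell(\lambda) - 1$ positions may be ordered in $\binom{\ell(\lambda)-1}{a_1, \ldots, a_j - 1, \ldots, a_d}$ ways with product weight $j\,(j-1)^{a_j - 1}\prod_{i \neq j}(i-1)^{a_i}$; the constraint $k_i \geq 2$ for $i \geq 1$ is enforced automatically since $(k_i - 1)$ vanishes at $k_i = 1$. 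Writing $j = (j-1) + 1$ decomposes the contribution into two pieces: the $(j-1)$ piece combines with $(j-1)^{a_j - 1}$ to give $(j-1)^{a_j}$, and summing over $j$ via the multinomial identity $\sum_{j : a_j > 0}\binom{\ell - 1}{a_1, \ldots, a_j - 1, \ldots, a_d} = \binom{\ell}{a_1, \ldots, a_d}$ yields the first term of the lemma; the $+1$ piece matches the $i$-indexed sum of the lemma term-by-term. The main obstacle is careful bookkeeping of the conventions ($0^0 = 1$ when $a_j = 0$, multinomials with a negative slot equal to zero), so that the edge cases $a_1 = 0$, $a_1 = 1$, and $a_1 \geq 2$ all align with the stated formula; this is routine once pinned down.
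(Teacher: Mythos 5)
The paper does not actually prove this lemma: it is quoted from Wolfe \cite{Wolfe} and used as a black box, so there is no in-paper argument to compare against. Judged on its own, your derivation is correct and self-contained. The power-sum expansion of $X_{P_n}$ over edge subsets, the bijection with compositions (with $|S| = n-\ell$ and the sign factoring as $\prod_i(-1)^{c_i-1}$), the identity $tE'(t)/E(t)=\sum_{k\ge1}(-1)^{k-1}p_kt^k$, and the resulting generating function $\sum_{n\ge1}X_{P_n}t^n = tE'(t)/\bigl(E(t)-tE'(t)\bigr)$ (equivalent to Stanley's Proposition 5.3) are all standard and correctly assembled. The coefficient extraction at the end --- distinguishing the position carrying weight $k_0$, splitting $j=(j-1)+1$, and applying the multinomial Pascal identity $\sum_{j:\,a_j>0}\binom{\ell-1}{a_1,\ldots,a_j-1,\ldots,a_d}=\binom{\ell}{a_1,\ldots,a_d}$ --- does reproduce both terms of the stated formula; it checks out on small cases such as $[e_{(2)}]X_{P_2}=2$, $[e_{(2,1)}]X_{P_3}=1$, and $[e_{(3)}]X_{P_3}=3$. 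The only delicate points are exactly the ones you flag: the conventions $0^0=1$ and that a multinomial with a negative lower entry is $0$, together with the observation that the constraint $k_i\ge2$ for $i\ge1$ is absorbed by the vanishing of the weight $(k_i-1)$ at $k_i=1$ (which is what forces any part equal to $1$ into the distinguished position). These are genuinely routine, so the argument is complete as written.
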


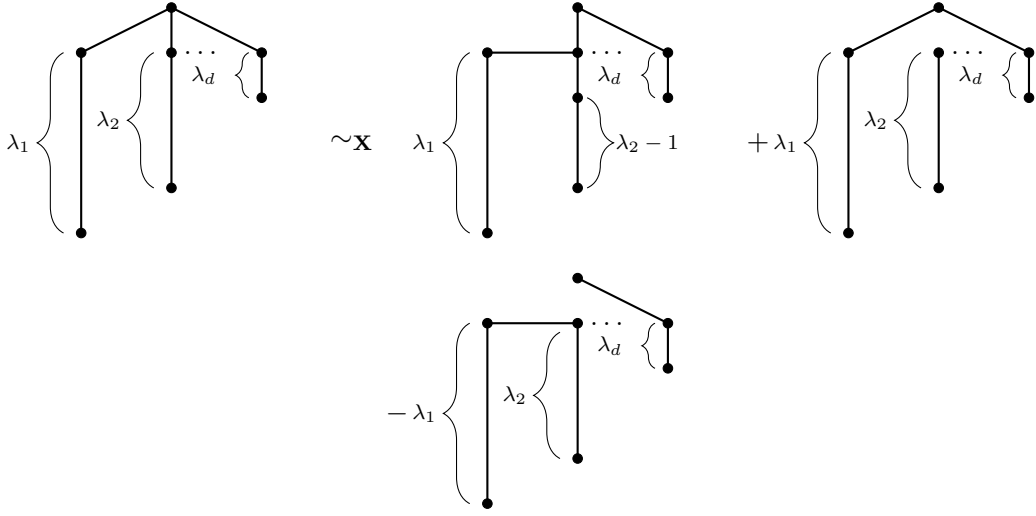
\begin{figure} 
    \centering
    \begin{center}
    \begin{tikzpicture}[scale=0.60] \label{spi dec fig}
    \pgfmathsetmacro {\xa }{-10}
    \pgfmathsetmacro {\ya }{0}
    \pgfmathsetmacro {\xeps }{4}

    \draw[fill=black] (\xa, \ya) circle (3pt);
    \draw[fill=black]  (\xa - 2, \ya - 1) circle (3pt);
    \draw[fill=black]  (\xa - 2, \ya -5) circle (3pt);
    
    \draw[fill=black] (\xa,\ya-1) circle (3pt);
    \draw[fill=black] (\xa,\ya-4) circle (3pt);
    
    \draw[fill=black] (\xa+2,\ya-1) circle (3pt);
    \draw[fill=black] (\xa+2,\ya-2) circle (3pt);
    
    \draw[thick] (\xa,\ya) -- (\xa - 2, \ya - 1) -- (\xa - 2, \ya -5) (\xa, \ya) -- (\xa,\ya-1) -- (\xa,\ya-4) (\xa,\ya)--(\xa+2,\ya-1)--(\xa+2,\ya-2);
    
    \node at (\xa + 0.7,\ya -1) {\ldots};
    \draw [decorate,decoration={brace,amplitude=10pt},xshift=-4pt,yshift=0pt] 
    (\xa-2-0.25, \ya - 5) -- (\xa-2-0.25, \ya - 1) node [black,midway,xshift=-0.6cm]{\scriptsize $\lambda_1$};
    \draw [decorate,decoration={brace,amplitude=10pt},xshift=-4pt,yshift=0pt]
    (\xa-0.25, \ya - 4) -- (\xa-0.25,\ya-1) node [black,midway,xshift=-0.6cm]{\scriptsize $\lambda_2$};
    \draw [decorate,decoration={brace,amplitude=5pt},xshift=-1pt,yshift=0pt]
    (\xa +2-0.25, \ya - 2) -- (\xa + 2-0.25,\ya - 1) node [black,midway,xshift=-0.6cm]{\scriptsize $\lambda_d$};
    \node at (\xa + \xeps, -3) {$\sim_\mathbf{X}$};
    \pgfmathsetmacro {\xa }{\xa + 2* \xeps + 1}

    \draw[fill=black] (\xa, \ya) circle (3pt);
    \draw[fill=black]  (\xa - 2, \ya - 1) circle (3pt);
    \draw[fill=black]  (\xa - 2, \ya -5) circle (3pt);
    
    \draw[fill=black] (\xa,\ya-1) circle (3pt);
    \draw[fill=black] (\xa,\ya-2) circle (3pt);

    \draw[fill=black] (\xa,\ya-4) circle (3pt);
    
    \draw[fill=black] (\xa+2,\ya-1) circle (3pt);
    \draw[fill=black] (\xa+2,\ya-2) circle (3pt);
    
    \draw[thick] (\xa - 2, \ya - 1) -- (\xa,\ya-1) (\xa - 2, \ya - 1) -- (\xa - 2, \ya -5) (\xa, \ya) -- (\xa,\ya-1) -- (\xa,\ya-4) (\xa,\ya)--(\xa+2,\ya-1)--(\xa+2,\ya-2);
    
    \node at (\xa + 0.7,\ya -1) {\ldots};

    \draw [decorate,decoration={brace,amplitude=10pt},xshift=-4pt,yshift=0pt] 
    (\xa-2-0.25, \ya - 5) -- (\xa-2-0.25, \ya - 1) node [black,midway,xshift=-0.6cm]{\scriptsize $\lambda_1$};
    \draw [decorate,decoration={brace,amplitude=10pt},xshift=-4pt,yshift=0pt]
    (\xa+0.35, \ya - 2) -- (\xa+0.35,\ya-4) node [black,midway,xshift=0.8cm]{\scriptsize $\lambda_2-1$};
    \draw [decorate,decoration={brace,amplitude=5pt},xshift=-1pt,yshift=0pt]
    (\xa + 2 -0.25, \ya - 2) -- (\xa + 2 -0.25,\ya - 1) node [black,midway,xshift=-0.6cm]{\scriptsize $\lambda_d$};
    \node at (\xa + \xeps, -3) {$\mathbf{+}$};

    \pgfmathsetmacro {\xa }{\xa + 2* \xeps}

    \draw[fill=black] (\xa, \ya) circle (3pt);
    \draw[fill=black]  (\xa - 2, \ya - 1) circle (3pt);
    \draw[fill=black]  (\xa - 2, \ya -5) circle (3pt);
    
    \draw[fill=black] (\xa,\ya-1) circle (3pt);
    \draw[fill=black] (\xa,\ya-4) circle (3pt);
    
    \draw[fill=black] (\xa+2,\ya-1) circle (3pt);
    \draw[fill=black] (\xa+2,\ya-2) circle (3pt);
    
    \draw[thick] (\xa,\ya) -- (\xa - 2, \ya - 1) -- (\xa - 2, \ya -5) (\xa,\ya-1) -- (\xa,\ya-4) (\xa,\ya)--(\xa+2,\ya-1)--(\xa+2,\ya-2);
    
    \node at (\xa + 0.7,\ya -1) {\ldots};
    \draw [decorate,decoration={brace,amplitude=10pt},xshift=-4pt,yshift=0pt] 
    (\xa-2-0.25, \ya - 5) -- (\xa-2-0.25, \ya - 1) node [black,midway,xshift=-0.6cm]{\scriptsize $\lambda_1$};
    \draw [decorate,decoration={brace,amplitude=10pt},xshift=-4pt,yshift=0pt]
    (\xa-0.25, \ya - 4) -- (\xa-0.25,\ya-1) node [black,midway,xshift=-0.6cm]{\scriptsize $\lambda_2$};
    \draw [decorate,decoration={brace,amplitude=5pt},xshift=-1pt,yshift=0pt]
    (\xa + 2 -0.25, \ya - 2) -- (\xa + 2 -0.25,\ya - 1) node [black,midway,xshift=-0.6cm]{\scriptsize $\lambda_d$};
    \pgfmathsetmacro {\xa }{\xa-2*\xeps}
    \pgfmathsetmacro {\ya }{\ya - 6}
    
    \node at (\xa - \xeps, -3 - 6) {$\mathbf{-}$};

    \draw[fill=black] (\xa, \ya) circle (3pt);
    \draw[fill=black]  (\xa - 2, \ya - 1) circle (3pt);
    \draw[fill=black]  (\xa - 2, \ya -5) circle (3pt);
    
    \draw[fill=black] (\xa,\ya-1) circle (3pt);
    \draw[fill=black] (\xa,\ya-4) circle (3pt);
    
    \draw[fill=black] (\xa+2,\ya-1) circle (3pt);
    \draw[fill=black] (\xa+2,\ya-2) circle (3pt);
    
    \draw[thick] (\xa,\ya-1) -- (\xa - 2, \ya - 1) -- (\xa - 2, \ya -5) (\xa,\ya-1) -- (\xa,\ya-4) (\xa,\ya)--(\xa+2,\ya-1)--(\xa+2,\ya-2);
    
    \node at (\xa + 0.7,\ya -1) {\ldots};
     
    \draw [decorate,decoration={brace,amplitude=10pt},xshift=-4pt,yshift=0pt] 
    (\xa-2-0.25, \ya - 5) -- (\xa-2-0.25, \ya - 1) node [black,midway,xshift=-0.6cm]{\scriptsize $\lambda_1$};
    \draw [decorate,decoration={brace,amplitude=10pt},xshift=-4pt,yshift=0pt]
    (\xa-0.25, \ya - 4) -- (\xa-0.25,\ya-1.2) node [black,midway,xshift=-0.6cm]{\scriptsize $\lambda_2$};
    \draw [decorate,decoration={brace,amplitude=5pt},xshift=-1pt,yshift=0pt]
    (\xa + 2 -0.25, \ya - 2) -- (\xa + 2 -0.25,\ya - 1) node [black,midway,xshift=-0.6cm]{\scriptsize $\lambda_d$};
    \end{tikzpicture}
    \end{center}
    
    \caption{ An illustration of Lemma \ref{lm: tri del} applied to a spider with $d$ legs. Here, $\sim_{\mathbf{X}}$ denotes that the chromatic symmetric function of the graph on the left equals the stated combination of chromatic symmetric functions of the graphs on the right. }
\end{figure}

\begin{lemma} \label{lm: spi decomp}
Let $S = S(\lambda_1, \ldots, \lambda_d)$ be a spider with $d$ legs, let $\lambda = (\lambda_1, \ldots, \lambda_d)$, and let $P_k$ denote the path on $k$ vertices. Denote by $\lambda_{-j}(i : m)$ the partition that is a modification of $\lambda$ with the $i$th part replaced by $m$ and the $j$th part deleted. Then, 
\begin{equation*}
    X_S = X_{S(\lambda_{-j}(i : \lambda_i + \lambda_j))} + \sum_{k=0}^{\lambda_j - 1}  \left( X_{S(\lambda_{-j}(i :\lambda_i + k))} X_{P_{\lambda_j - k}} - X_{S(\lambda_{-j}(i : k))}X_{P_{\lambda_i + \lambda_j - k}} \right)
\end{equation*}
In particular, when $S = S(\lambda_1, \lambda_2, \lambda_3)$, is a spider with three legs, we get the following decomposition where $(a,b,c)$ is some permutation of $(\lambda_1, \lambda_2, \lambda_3)$
\begin{equation*}
    X_S = X_{P_n}+ X_{P_{a+b+1}}X_{P_{c}} + \cdots + X_{P_{a+b+c}}X_{P_{1}} - X_{P_{a+1}}X_{P_{b+c}} - \cdots - X_{P_{a+c}}X_{P_{b+1}}.
\end{equation*}
\end{lemma}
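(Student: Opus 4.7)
The plan is to apply the Triple-Deletion Rule (Lemma~\ref{lm: tri del}) iteratively $\lambda_j$ times, so as to gradually ``pull'' leg $j$ onto leg $i$. Each application peels off one positive and one negative spider-times-path term, leaving behind an intermediate graph whose only difference from the previous one is that leg $i$'s attachment point has moved one step further along leg $j$. After $\lambda_j$ steps, the remaining intermediate graph is exactly the spider $S(\lambda_{-j}(i:\lambda_i+\lambda_j))$, and the accumulated terms telescope into the claimed formula.

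To set this up, label the vertices on leg $k$ by $u_1^{(k)}, \ldots, u_{\lambda_k}^{(k)}$ with $u_1^{(k)}$ adjacent to the center $v$, and adopt the convention $u_0^{(j)}=v$. Let $H_0 := S$, and for $1 \leq k \leq \lambda_j$ let $H_k$ be the graph on $V(S)$ obtained from $S$ by deleting the edge $v\, u_1^{(i)}$ and adding the edge $u_k^{(j)}\, u_1^{(i)}$. Thus in $H_k$, leg $i$ is attached to $u_k^{(j)}$ rather than to $v$, and $u_k^{(j)}$ is a degree-$3$ branch point (for $1\leq k<\lambda_j$). A direct check shows that $H_{\lambda_j}$ is precisely the spider $S(\lambda_{-j}(i:\lambda_i+\lambda_j))$, in which legs $i$ and $j$ have merged into a single leg of length $\lambda_i+\lambda_j$.

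For each $k \in \{0, 1, \ldots, \lambda_j - 1\}$, apply Lemma~\ref{lm: tri del} to $H_k$ at the triple with central vertex $u_k^{(j)}$ and adjacent vertices $u_1^{(i)}$ and $u_{k+1}^{(j)}$. The associated edges are $e_1 = u_k^{(j)} u_1^{(i)}$ and $e_2 = u_k^{(j)} u_{k+1}^{(j)}$, both in $E(H_k)$, and $e_3 = u_1^{(i)} u_{k+1}^{(j)}$, which is not. Inspecting the three resulting graphs yields: $(H_k)_{\{e_2,e_3\}} = H_{k+1}$; $(H_k)_{\{e_1\}}$ is the disjoint union of $S(\lambda_{-j}(i:\lambda_i+k))$ and the path $P_{\lambda_j-k}$ on $u_{k+1}^{(j)}, \ldots, u_{\lambda_j}^{(j)}$; and $(H_k)_{\{e_3\}}$ is the disjoint union of $S(\lambda_{-j}(i:k))$ and the path $P_{\lambda_i+\lambda_j-k}$ on $u_{\lambda_i}^{(i)}, \ldots, u_1^{(i)}, u_{k+1}^{(j)}, \ldots, u_{\lambda_j}^{(j)}$. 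Using Lemma~\ref{lm: disj} to factor chromatic symmetric functions over disjoint unions then gives
\begin{equation*}
X_{H_k} = X_{H_{k+1}} + X_{S(\lambda_{-j}(i:\lambda_i+k))}\, X_{P_{\lambda_j - k}} - X_{S(\lambda_{-j}(i:k))}\, X_{P_{\lambda_i + \lambda_j - k}},
\end{equation*}
and telescoping over $k = 0, 1, \ldots, \lambda_j - 1$ yields the main identity.

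The three-leg special case follows immediately: with $d=3$, each spider $S(\lambda_{-j}(i:m))$ appearing in the formula has just two legs and is therefore a path (on $m+\lambda_k+1$ vertices, where $k$ labels the third leg), so every such factor collapses to a path chromatic symmetric function and the formula takes the stated product-of-paths form. The main technical obstacle is purely bookkeeping: one must verify that each $H_k$ has the claimed ``rerouted-leg'' shape, that the triple-deletion rule applies at the chosen triple (in particular $e_3\notin E(H_k)$), and that the two ``side'' graphs in each step split into the stated spider-plus-path disjoint unions. Once these are in place, the telescoping identity and its reduction to the three-leg form are mechanical.
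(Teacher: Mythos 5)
Your proof is correct and follows essentially the same route as the paper: both iteratively apply the Triple-Deletion Rule at successive vertices along leg $j$, sliding the attachment point of leg $i$ outward one step at a time, with the two kept terms at each step splitting by Lemma \ref{lm: disj} into the spider-times-path products and the third term becoming the next intermediate graph. Your version merely makes the intermediate graphs $H_k$ and the telescoping explicit, which the paper leaves as an informal ``repeating this procedure.''
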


\begin{proof}
    This is a direct application of Lemmas \ref{lm: disj} and \ref{lm: tri del}. Starting from the center of the spider, $S$, we apply the Triple-Deletion Rule with $e_1$ as the first edge on the leg of length $\lambda_i$, $e_2$ as the first edge on the leg of length $\lambda_j$, and $e_3$ as the edge forming a triangle with $e_1$ and $e_2$. Notice that this decomposes $X_S$ into a linear combination of three terns. Two of the terms, $X_{S_{\{e_1\}}}$ and $X_{S_{\{e_3\}}}$, are products of a chromatic symmetric function of a spider with fewer legs and a chromatic symmetric function of a path, by Lemma \ref{lm: disj}. We keep these terms. For the third term, $X_{S_{\{e_2,e_3\}}}$, notice that $S_{\{e_2,e_3\}}$ is the following modification of the spider $S$. It has the leg of length $\lambda_i$ attached, not to the center, but instead to the vertex on the leg of length $\lambda_j$ that is adjacent to the center. By applying the Triple-Deletion Rule to $X_{S_{\{e_2,e_3\}}}$, we again obtain two terms that we keep, and one term with the leg of length $\lambda_i$ attached to the second vertex (from the center) on the leg of length $\lambda_j$. Repeating this procedure gives the desired decomposition of $X_S$. An example of the first step, with $i = 1$ and $j = 2$ is shown in Figure \ref{spi dec fig}.
\end{proof}

Lemma~\ref{lm: spi decomp} is especially helpful if we are calculating $[e_\mu]X_S$ for some $\mu$ where all parts are large. For example, suppose all parts of $\mu$ are greater than $\lambda_j$ in Lemma~\ref{lm: spi decomp}. Then notice that each term containing $X_{P_{\lambda_j - k}}$ in the summation has no affect on the coefficient $[e_\mu]X_S$ and can be ignored. This is because when $X_{P_{\lambda_j - k}}$ is expanded in terms of elementary symmetric functions, every nonzero term must contain some $e_m$ where $m$ is a positive integer at most $\lambda_j - k$ and strictly smaller than all parts of $\mu$. 

With this strategy, it is not hard to calculate specific $e$\nobreakdash-coefficients of the chromatic symmetric functions of spiders with three or four legs. A similar calculation is done for spiders of the forms $S(r,1,1)$ and $S(r,s,1,1)$ in \cite{DSvW}. 

\begin{lemma}
Let $S = S(\lambda_1, \lambda_2, \lambda_3, \lambda_4)$ be an $n$-vertex spider with a connected partition of every type, $m = \lambda_3 + \lambda_4$, and
$n = mq + r$ with $1 \leq r < m$. Then $[e_{(m+r, m^{q-1})}]X_S = (m - 1)^{q - 3} (m^3 - m^2 q + m^2 r - 2 m^2 - m q r + m q + m + r)$.
\end{lemma}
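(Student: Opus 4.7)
The approach is to apply the spider decomposition (Lemma \ref{lm: spi decomp}) twice, reducing $X_S$ to a signed combination of $X_{P_n}$ and products of chromatic symmetric functions of shorter paths, and then to extract the target coefficient using Wolfe's formula (Lemma \ref{lm: Wolfe}).

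First apply Lemma \ref{lm: spi decomp} to $S$ with $(i,j)=(3,4)$, which merges legs $\lambda_3$ and $\lambda_4$ and expresses $X_S$ as $X_{S(\lambda_1,\lambda_2,m)}$ plus a signed sum of products each containing a path factor of length at most $m$. Because every part of $(m+r,m^{q-1})$ has size at least $m$, and the $e$-expansion of $X_{P_\ell}$ for $\ell<m$ only involves partitions whose parts are strictly smaller than $m$, any product containing a short path factor contributes zero to $[e_{(m+r,m^{q-1})}]$. This filter eliminates all but a single extra term, namely $-X_{P_{\lambda_1+\lambda_2+1}} X_{P_m}$ (arising at $k=0$ in the negative family, where the degenerate $0$-leg spider collapses to a path). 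Then apply Lemma \ref{lm: spi decomp} once more to $X_{S(\lambda_1,\lambda_2,m)}$ to reduce the 3-leg spider to $X_{P_n}$ plus products of two paths. The same filter applied again retains only the positive-family term $+X_{P_{\lambda_1+\lambda_2+1}} X_{P_m}$, which cancels the contribution just identified, together with two cross-term products from the negative family, corresponding to the two values of $k\in[0,m-1]$ that make the first path factor's length congruent to $0$ or to $r$ modulo $m$.

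The hypothesis that $S$ has a partition of type $(m^q,r)$, via Theorem \ref{Th: mod-test}, constrains the residues $\lambda_1^{(m)}$ and $\lambda_2^{(m)}$ -- specifically it pins down their sum -- which is exactly the condition that guarantees both $k$-values lie in $[0,m-1]$ and, on writing $\lambda_i = m q_i + \lambda_i^{(m)}$, fixes $q_1+q_2$ in terms of $q$. Wolfe's formula (Lemma \ref{lm: Wolfe}) then evaluates each path coefficient appearing in the surviving products to a polynomial of the form $(m-1)^{a-1}$ times a cubic in $m$, $r$; the symmetry between the two cross-terms causes the residue-dependent pieces to cancel, leaving a polynomial in $m$, $q$, and $r$ alone. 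Combining this with the $X_{P_n}$ contribution, which Wolfe's formula evaluates directly, and factoring out the common power of $(m-1)$, one obtains the stated identity.

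The main obstacle will be the final algebraic simplification: carefully combining three contributions, tracking signs through two layers of triple-deletion, and verifying that the residue dependencies indeed vanish in the symmetric sum over $q_1,q_2$. A secondary subtlety concerns edge cases in which $q_1$ or $q_2$ equals zero; then one of the cross-term products has a second path factor with fewer than $m$ vertices, so the corresponding $k$-value yields no valid split, and one must check that the formula continues to hold under the resulting natural conventions.
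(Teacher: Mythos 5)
Your proposal matches the paper's proof essentially step for step: the paper likewise applies Lemma \ref{lm: spi decomp} to merge legs $\lambda_3$ and $\lambda_4$ (retaining only $X_{S(\lambda_1,\lambda_2,m)} - X_{P_{1+\lambda_1+\lambda_2}}X_{P_m}$ after discarding products with a path factor shorter than $m$), then reduces the three-legged spider to paths so that the $X_{P_{1+\lambda_1+\lambda_2}}X_{P_m}$ terms cancel and exactly the two cross-terms $X_{P_{m(q_1+1)}}X_{P_{mq_2+r}}$ and $X_{P_{m(q_2+1)}}X_{P_{mq_1+r}}$ survive, uses Theorem \ref{Th: mod-test} to get $1+r_1+r_2=r$, and finishes with Lemma \ref{lm: Wolfe}. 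Your observations that the symmetric sum depends only on $q_1+q_2=q$ and that the $q_i=0$ edge case needs care are both correct (the latter is a point the paper itself glosses over), so the proposal is sound and takes the same route.
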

\begin{proof}
    This is a direct calculation using Lemmas \ref{lm: Wolfe} and \ref{lm: spi decomp}. Write $\lambda_1 = mq_1 + r_1$ and $\lambda_2 = mq_2 + r_2$ for $0 \leq r_1, r_2 < m$. Since $S$ has a connected partition of every type, the first item of Corollary \ref{Cor: var cond} implies $\lambda_1 \geq \lambda_2 \geq \lambda_3 + \lambda_4 = m$ and $q_1, q_2 \geq 1$, while Theorem~\ref{Th: mod-test} implies $1+r_1+r_2 + \lambda_3 + \lambda_4 = 1 + r_1 + r_2 + m = m + r$ and $1 + r_1 + r_2 = r$. 
    
    Using the decomposition in Lemma \ref{lm: spi decomp} and ignoring terms that do not contribute to the coefficient of $e_{(m^{q-1},m+r)}$, particularly those involving a path whose length is not $r$ modulo $m$, yields,
    \begin{align*}
        [e_{(m^{q-1},m+r)}]X_S &= [e_{(m^{q-1},m+r)}](X_{S(\lambda_1, \lambda_2, m)} - X_{P_{1+\lambda_1+\lambda_2}}X_{P_m}) \\ 
        &= [e_{(m^{q-1}, m+r)}]\left( X_{P_{mq + r}} -  X_{P_{m(q_1+1)}}X_{P_{mq_2+r}} -  X_{P_{m(q_2+1)}}X_{P_{mq_1+r}} \right) \\
        & = (m - 1)^{q - 3} (m^3 - m^2 q + m^2 r - m^2 - m q r + m q + mr + r),
    \end{align*}
    as desired.
\end{proof}

\begin{coro} \label{cor: 4 leg q}
Let $S = S(\lambda_1, \lambda_2, \lambda_3, \lambda_4)$ be an $n$-vertex spider, $m = \lambda_3 + \lambda_4$, and $n = mq + r$ where $0 \leq r < m$. If $q > m$, then $S$ is not $e$\nobreakdash-positive.
\end{coro}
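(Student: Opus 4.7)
The plan is to split on whether $r=0$ or $r>0$, and, in the latter case, on whether $S$ admits the connected partition required by the preceding lemma.

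If $r=0$, then $n=m(q+1)$, so $m\mid n$. Taking $i=3$ in part (4) of Corollary \ref{Cor: var cond}, we have $\lambda_3+1\leq m=\lambda_3+\lambda_4$ (since $\lambda_4\geq 1$) and $m\leq \lambda_3+\lambda_4$, so the divisibility criterion fires and $S$ is not $e$-positive. (Here we do not need $q\geq m$ at all.)

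If $r>0$, then either $S$ is missing a connected partition of the type required in the hypothesis of the previous lemma — in which case Lemma \ref{Wolf} immediately rules out $e$-positivity — or that hypothesis holds and we can invoke the explicit formula
\[
[e_{(m+r,m^{q-1})}]X_S = (m-1)^{q-2}\bigl(m^3-m^2q+m^2r-2m^2-mqr+mq+m+r\bigr).
\]
Since $\lambda_3,\lambda_4\geq 1$ forces $m\geq 2$, and $q\geq m\geq 2$, the prefactor $(m-1)^{q-2}$ is positive; so it suffices to show that the polynomial factor $f(q,r)$ is strictly negative on the region $q\geq m$, $1\leq r\leq m-1$. For this I would first note $\partial f/\partial q = -m(m+r-1)<0$, so $f$ is decreasing in $q$ and it is enough to check $q=m$; a direct substitution gives $f(m,r) = -m(m-1)+r$, which is bounded above by $-(m-1)^2<0$ for every $r\leq m-1$. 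Hence $f(q,r)<0$ throughout the relevant region, the coefficient is negative, and $X_S$ fails to be $e$-positive.

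I do not anticipate a substantial obstacle: the whole argument is a short case split feeding into the prior lemma, plus a two-line sign analysis of a polynomial. The only subtlety is handling the edge case $r=0$ cleanly via the existing divisibility corollary, since the coefficient formula in the preceding lemma was only stated for $r\neq 0$.
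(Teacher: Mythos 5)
Your proposal is correct and follows essentially the same route as the paper: dispose of $r=0$ via the modular/divisibility criteria, invoke Lemma \ref{Wolf} if the connected partition of type $(m^q,r)$ is missing, and otherwise show the explicit coefficient $[e_{(m+r,m^{q-1})}]X_S$ is negative for $q\geq m$. The only differences are cosmetic (the paper cites Theorem \ref{Th: mod-test} directly for $r=0$ where you cite Corollary \ref{Cor: var cond}(4), and it substitutes $q=m+c$ where you argue by monotonicity in $q$), and your sign computation $f(m,r)=-m(m-1)+r\leq -(m-1)^2<0$ checks out.
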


\begin{proof}
    This is immediate from the previous Lemma. If $S$ does not have a partition of every type then it is not $e$\nobreakdash-positive by Lemma \ref{Wolf}. If $r = 0$, then notice $\lambda_1^{(m)} + \lambda_2^{(m)} + \lambda_3^{(m)} + \lambda_4^{(m)} + 1 \geq m + 1 > m$, so $S$ is not $e$\nobreakdash-positive  by Theorem \ref{Th: mod-test}.
    
    Lastly if $S$ has a connected partition of every type and $r > 0$, then the previous lemma can be applied. Writing $q = m + c$ for some $c \geq 1$ yields, 
    \begin{align*}
        [e_{(m+r, m^{q-1})}]X_S &= (m - 1)^{q - 3} (-m^2c -mcr + mc + mr + r) \\
        &= (m - 1)^{q - 3} (r-m(mc-c + cr-r)) < 0,
    \end{align*}
    where the last inequality is due to the fact that $m \geq 2$, $m > r$, and $c \geq 1$. Therefore $S$ is not $e$-positive.
\end{proof}
This result also implies that any spider of the form $S(a,b,2,1)$ with $a+b \geq 8$, is not $e$\nobreakdash-positive. For $a+b < 8$ it is straightforward to verify by calculation that each spider $S(a,b,2,1)$ is not $e$\nobreakdash-positive. It is worth noting that for $S = (\lambda_1, \lambda_2, \lambda_3, 1)$, Theorem \ref{Dahl-q} cannot be applied on the leg of length $\lambda_3$, but Corollary \ref{cor: 4 leg q} solves this problem and provides a similar $e$\nobreakdash-positivity condition. This corollary, along with the other results of Section \ref{sec: e-pos}, allow us to confirm that no spider with four legs and at most $400$ vertices is $e$\nobreakdash-positive by directly checking each of the conditions in this paper on each of the spiders. However, the same cannot be said for trees with a vertex of degree $4$, as in some cases we do not show that a connected partition of some type is missing.

While the focus has mostly been on spiders that are not $e$\nobreakdash-positive, we briefly comment on spiders that are $e$\nobreakdash-positive. Most known families of $e$\nobreakdash-positive graphs are composed of paths, cycles, and complete graphs, so it is rare to find families of $e$\nobreakdash-positive trees. One such family however, are the spiders $S(n, n-1, 1)$, which are shown to be $e$\nobreakdash-positive in \cite{DFvW}, for example. In addition to this family, we can calculate that the spiders $S(6,2,1), S(5,3,2), S(6,4,2), S(8, 6, 2), S(9,7,2), S(9,6,1), S(11,6,1),$ and $S(15, 6, 1)$ are $e$\nobreakdash-positive. We state two conjectured families of $e$\nobreakdash-positive spiders in Section~\ref{sec: fin} that include some of these spiders. We were informed of these families by Aliniaeifard, van Willigenburg, and Wang \cite{AvWW}.

\section{More on the $e$\nobreakdash-positivity of spiders} \label{sec: coeff calcs}

Motivated by calculations on spiders with four legs, we attempt to extend these calculations to spiders with any number of legs. The main result of this section is Theorem \ref{th: odd legs} which gives further conditions on when spiders with two odd length legs are $e$\nobreakdash-positive.

\begin{lemma} \label{lm: mq coef}
Let $S = S(\lambda_1, \dots, \lambda_d)$ be an $n$ vertex spider and suppose $n = mq$ for positive integers $m$ and $q$ with $m > 1$. If $S$ has a connected partition of type $(m^q)$, then, $[e_{(m^q)}]X_S = m(m-1)^{q-1}$.
\end{lemma}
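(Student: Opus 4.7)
The plan is to induct on the number of legs $d$, using Lemma \ref{lm: spi decomp} to reduce a $d$-leg spider to chromatic symmetric functions of spiders with fewer legs multiplied by chromatic symmetric functions of paths. The base case $d = 1$ gives $S = P_{mq}$, and Lemma \ref{lm: Wolfe} with $\lambda = (m^q)$ (so only $a_m = q$ is nonzero) collapses to $(m-1)^q + (m-1)^{q-1} = m(m-1)^{q-1}$. Since $S(\lambda_1, \lambda_2)$ is also a path, the case $d = 2$ is handled in the same way. For the setup, observe that by Theorem \ref{Th: mod-test} the hypothesis that $S$ has a connected partition of type $(m^q)$ is equivalent to $\sum_{i=1}^d \lambda_i^{(m)} = m - 1$; this is the residue identity that controls everything below.

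For the inductive step with $d \geq 3$, I apply Lemma \ref{lm: spi decomp} with $i = 1$, $j = 2$ to write
\begin{equation*}
    X_S = X_{S'} + \sum_{k=0}^{\lambda_2 - 1} \bigl( X_{S_k^{(+)}} X_{P_{\lambda_2 - k}} - X_{S_k^{(-)}} X_{P_{\lambda_1 + \lambda_2 - k}} \bigr),
\end{equation*}
where $S' = S(\lambda_1 + \lambda_2, \lambda_3, \ldots, \lambda_d)$, $S_k^{(+)} = S(\lambda_1 + k, \lambda_3, \ldots, \lambda_d)$, and $S_k^{(-)} = S(k, \lambda_3, \ldots, \lambda_d)$ (with the convention that $k = 0$ just drops that leg). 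The residue identity forces $\lambda_1^{(m)} + \lambda_2^{(m)} \leq m - 1$, so no residue ``wraps'' when the two legs are combined, and $S'$ still has residue sum $m-1$; strong induction therefore gives $[e_{(m^q)}] X_{S'} = m(m-1)^{q-1}$.

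For the sum, the coefficient $[e_{(m^q)}]$ can pick up a contribution only when the accompanying path factor has length a positive multiple of $m$, forcing $\lambda_2 - k = mb$ in the positive term and $\lambda_1 + \lambda_2 - k = mb'$ in the negative term. A short residue check shows that each surviving sub-spider $S_k^{(\pm)}$ still satisfies the identity $\sum \lambda^{(m)} = m-1$, so its $[e_{(m^{q-b})}]$ coefficient is supplied by induction and the path coefficient by the base case. Each surviving positive term then contributes exactly $m(m-1)^{q-b-1}\cdot m(m-1)^{b-1} = m^2(m-1)^{q-2}$, and each surviving negative term contributes $-m^2(m-1)^{q-2}$. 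What remains is to count: the positive $b$ range has $\lfloor \lambda_2/m \rfloor$ elements, and using $\lambda_1^{(m)} + \lambda_2^{(m)} < m$ one checks that the negative $b'$ range $\lfloor \lambda_1/m \rfloor + 1 \leq b' \leq \lfloor (\lambda_1+\lambda_2)/m \rfloor$ has the same size. The two sums cancel, leaving $[e_{(m^q)}] X_S = m(m-1)^{q-1}$.

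The main obstacle is the residue bookkeeping together with the matching of the two range sizes: one must verify, for every sub-spider that actually contributes, that the identity $\sum \lambda_i^{(m)} = m - 1$ is inherited, which uses $\lambda_1^{(m)} + \lambda_2^{(m)} < m$ at each step. A secondary subtlety is the edge case $k = 0$ in the negative sum, which produces a spider with only $d - 2$ legs (just a path when $d = 3$); this is why the induction is phrased as strong induction on $d$.
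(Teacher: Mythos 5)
Your proposal is correct and follows essentially the same route as the paper: induction on the number of legs with the path base case from Lemma \ref{lm: Wolfe}, the decomposition of Lemma \ref{lm: spi decomp} applied to two legs, the residue identity $\sum_i \lambda_i^{(m)} = m-1$ from Theorem \ref{Th: mod-test} to propagate the hypothesis to the sub-spiders, and the matching count of $\lfloor \lambda_2/m\rfloor$ surviving positive and negative terms (each contributing $\pm m^2(m-1)^{q-2}$) so that only the combined-leg term survives. The only cosmetic difference is that the paper chooses the leg $\lambda_i$ to have nonzero residue mod $m$ (which rules out the degenerate $k=0$ term), whereas you fix $i=1,j=2$ and absorb that edge case into the strong induction; both are fine.
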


\begin{proof}
    Fix a positive integer $m$. The proof is by induction on $d$, the number of legs of the spider $S$, over all $S$ that have a connected partition of type $(m^q)$ for some positive integer $q$. When $d = 1$ or $2$, note that $S$ is a path and the result is a direct application of Lemma \ref{lm: Wolfe}. Now suppose that $d \geq 3$, and that the result holds for all smaller $d$. Let $S$ be a spider with $d$ legs and a connected partition of type $(m^q)$. Choose two arbitrary legs of lengths $\lambda_i = mk_1 + r_1$ and $\lambda_j = mk_2 + r_2$, where $0 < r_1 < m$ and $0 \leq r_2 < m$. Since $S$ has a connected partition of type $(m^q)$, Theorem \ref{Th: mod-test} implies that $r_1 + r_2 < m$. Expanding $X_S$ with Lemma \ref{lm: spi decomp} yields
    \begin{equation*}
        X_S = X_{S(\lambda_{-j}(i : \lambda_i + \lambda_j))} + \sum_{k=0}^{\lambda_j - 1}  \left( X_{S(\lambda_{-j}(i : \lambda_i + k))} X_{P_{\lambda_j - k}} - X_{S(\lambda_{-j}(i : k))}X_{P_{\lambda_i + \lambda_j - k}} \right).
    \end{equation*}
    
    Notice that the chromatic symmetric functions appearing above are all of spiders with fewer than $d$ legs. As we are only interested in the coefficient of $e_{(m^q)}$, the only relevant terms in the summation above are the $X_{S(\lambda_{-j}(i : \lambda_i + k))} X_{P_{\lambda_j - k}}$ with $\lambda_j - k \equiv 0 \pmod{m}$ and $X_{S(\lambda_{-j}(i : k))}X_{P_{\lambda_i + \lambda_j - k}}$ with $\lambda_i + \lambda_j - k \equiv 0 \pmod{m}$. Notice that in the respective cases, $S(\lambda_{-j}(i : \lambda_i + k))$ and $S(\lambda_{-j}(i : k))$ are spiders with fewer than $d$ legs, have $mq'$ vertices for some positive integers $q'$, and satisfy the conditions of Theorem \ref{Th: mod-test}. In other words, whenever $P_{\lambda_j - k}$ (resp. $P_{\lambda_i + \lambda_j - k}$) is a path of order divisible by $m$, $S(\lambda_{-j}(i : \lambda_i + k))$ (resp. $S(\lambda_{-j}(i : k))$) is a spider with fewer than $d$ legs that has a connected partition with all parts of size $m$. 
    
    The number of terms of the first kind is equal to the number of multiples of $m$ in $\{1, \ldots, mk_2 + r_2\}$ and the number of terms of the second kind is equal to the number of multiples of $m$ in $\{mk_1 + r_1 +1, \ldots , mk_1 + mk_2 + r_1 + r_2 \}$. Since $0 < r_1 \leq r_1 + r_2 < m$, in each case this number is $k_2$.
    
    Finally, for both types of terms, by the inductive assumption, the coefficient of $e_{(m^q)}$ is $m^2(m-1)^{q-2}$, and as there are $k_2$ of both terms, the $e_{(m^q)}$ terms over the summation all cancel out leaving only the $e_{(m^q)}$ term in $X_{S(\lambda_{-j}(i : \lambda_i + \lambda_j))}$. It is easy to check that $S(\lambda_{-j}(i : \lambda_i + \lambda_j))$ has fewer than $d$ legs and has a connected partition of type $(m^q)$. By the inductive assumption,  $[e_{(m^q)}]X_{S(\lambda_{-j}(i : \lambda_i + \lambda_j))} = m(m-1)^{q-1}$, and as a result $[e_{(m^q)}]X_S = m(m-1)^{q-1}$ as well.
\end{proof}

\begin{lemma} \label{lm: 2... coef}
Let $S = S(\lambda_1, \dots, \lambda_d)$ be a spider with an even number of vertices and $j$ legs of odd length. Then, $[e_{(2^{n/2})}]X_S = (-1)^{\frac{j-1}{2}}2$. 
\end{lemma}

\begin{proof}
    First notice that $j$ must be odd in order for $S$ to have an even number of vertices, so $(-1)^{\frac{j-1}{2}}2$ is well defined.
    The proof is by induction on $j$, the number of odd length legs. When $j = 1$, the result is simply a special case of Lemma \ref{lm: mq coef} with $m = 2$. Now suppose that $S$ is an $n$-vertex spider with $j \geq 3$ odd length legs and that the result holds for all spiders with $j-2$ odd length legs. Let $\lambda_i$ and $\lambda_j$ be two of the odd lengths. By Lemma \ref{lm: spi decomp},
    \begin{align*}
        X_S = X_{S(\lambda_{-j}(i : \lambda_i + \lambda_j))} + \sum_{k=0}^{\lambda_j - 1}  \left( X_{S(\lambda_{-j}(i : \lambda_i + k))} X_{P_{\lambda_j - k}} \right) - \sum_{k=0}^{\lambda_j - 1}  \left( X_{S(\lambda_{-j}(i : k))}X_{P_{\lambda_i + \lambda_j - k}} \right).
    \end{align*}
    Since we are only interested in the coefficient of $e_{(2^{n/2})}$, all terms in the first summation where $k$ is even and all terms where $k$ is odd in the second summation can be ignored. The only relevant terms are of the form $X_{S(\lambda_{-j}(i : 2q))}X_{P_{2p}}$ with $2q + 2p = \lambda_i + \lambda_j$. Since $S(\lambda_{-j}(i : 2q))$ is a spider with exactly $j-2$ legs of odd length, by the inductive assumption, $[e_{(2^{n/2})}]X_{S(\lambda_{-j}(i : 2q))}X_{P_{2p}} = (-1)^{\frac{j-3}{2}}4$. Thus, the coefficient of $e_{(2^{n/2})}$ in the expansion of $X_S$ is
    \begin{equation*}
        [e_{(2^{n/2})}]X_S = (-1)^{\frac{j-3}{2}}2 + \left(\frac{\lambda_j - 1}{2}\right) (-1)^{\frac{j-3}{2}}4 -\left(\frac{\lambda_j + 1}{2}\right) (-1)^{\frac{j-3}{2}}4  =  (-1)^{\frac{j-1}{2}}2,
    \end{equation*}
    as desired.
\end{proof}

\begin{theo} \label{th: odd legs}
Let $S = S(\lambda_1, \dots, \lambda_d)$ be an $n$ vertex spider with exactly two legs of odd length. Suppose that $2k_1 + 1$ and $2k_2 + 1$ are the lengths of the odd length legs, while $2k_3, \dots, 2k_d$ are the lengths of the even length legs (if they exist), then
\begin{equation*}
    [e_{(3, 2^{k_1 + \cdots + k_d})}]X_S = 4(k_1 + k_2 - k_3 - \cdots - k_d) + 2d - 1.
\end{equation*}
In particular, if a spider $S(\lambda_1, \dots, \lambda_d)$ has exactly two legs of odd length, $d \geq 4$, and $\lambda_1$ even, then it is not $e$\nobreakdash-positive.
\end{theo}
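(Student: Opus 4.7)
The plan is to prove the coefficient formula by induction on $d \geq 2$, using the spider decomposition of Lemma \ref{lm: spi decomp}, together with Lemmas \ref{lm: Wolfe}, \ref{lm: mq coef}, and \ref{lm: 2... coef}; the ``in particular'' claim will follow from a short comparison against Corollary \ref{Cor: var cond}(1). Throughout, write $K = k_1 + \cdots + k_d$, so that $n = 2K+3$.

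For the base case $d = 2$, the spider $S$ is the path $P_n$, and the coefficient $[e_{(3, 2^{K})}] X_{P_n}$ can be computed directly from Lemma \ref{lm: Wolfe} with $a_2 = K$ and $a_3 = 1$. The three contributing terms give $2(K+1) + 2K + 1 = 4K + 3 = 4(k_1+k_2) + 3$, matching the formula at $d = 2$. For the inductive step $d \geq 3$, I apply Lemma \ref{lm: spi decomp} using the odd leg $\lambda_1 = 2k_1+1$ and the even leg $\lambda_d = 2k_d$ (which exists since $d \geq 3$), obtaining
\begin{equation*}
X_S = X_{S'} + \sum_{k=0}^{2k_d-1} X_{S_1(k)} X_{P_{2k_d - k}} - \sum_{k=0}^{2k_d-1} X_{S_2(k)} X_{P_{2k_1 + 2k_d + 1 - k}},
\end{equation*}
where $S' = S(2k_1+2k_d+1,\, 2k_2+1,\, 2k_3,\ldots, 2k_{d-1})$ has $d-1$ legs and still exactly two odd legs, $S_1(k) = S(2k_1+1+k,\, 2k_2+1,\, 2k_3,\ldots, 2k_{d-1})$, and $S_2(k) = S(k,\, 2k_2+1,\, 2k_3, \ldots, 2k_{d-1})$. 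The inductive hypothesis applies to $S'$ and yields a value that differs from the target $4(k_1+k_2-k_3-\cdots-k_d)+2d-1$ by exactly $-8k_d+2$; the bulk of the work is to show that the two sums together contribute precisely $-8k_d+2$ to the coefficient of $e_{(3,2^K)}$.

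Here the key observation, as in Lemma \ref{lm: 2... coef}, is that a product $X_{\text{spider}} \cdot X_{P_\ell}$ can contribute to $[e_{(3,2^K)}]$ only in two ways: the path carries either $e_{(2^p)}$ (so $\ell = 2p$ is even) with the spider carrying $e_{(3, 2^{K-p})}$, or $e_{(3,2^p)}$ (so $\ell = 2p+3$ is odd) with the spider carrying $e_{(2^{K-p})}$. Splitting each sum by the parity of $k$ gives four cases, and in each case the parity of the first leg of $S_1(k)$ or $S_2(k)$ flips accordingly: whenever the path is even, the smaller spider has two odd legs and the inductive hypothesis applies; whenever the path is odd, the smaller spider has exactly one odd leg, so Lemma \ref{lm: 2... coef} (or equivalently Lemma \ref{lm: mq coef} with $m=2$) gives the coefficient $2$. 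The path coefficients are $[e_{(2^p)}] X_{P_{2p}} = 2$ and $[e_{(3,2^p)}] X_{P_{2p+3}} = 4p+3$ by Lemma \ref{lm: Wolfe}. Each of the four cases then reduces to summing an arithmetic progression in $a \in \{0,1,\ldots,k_d-1\}$ (or $p$ of the same length), and the four contributions combine to yield exactly $-8k_d + 2$, closing the induction.

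For the ``in particular'' claim, assume $d \geq 4$, $\lambda_1$ is even, and $S$ is $e$-positive. Since the formula is symmetric in the even-leg parameters $k_3,\ldots,k_d$, relabel so that $\lambda_1 = 2k_3$. Corollary \ref{Cor: var cond}(1) forces $\lambda_1 \geq \lfloor n/2 \rfloor = K+1$, i.e.\ $k_3 \geq k_1+k_2+k_4+\cdots+k_d+1$, and since each $k_j \geq 1$ for $j \geq 4$ we obtain
\begin{equation*}
k_1 + k_2 - k_3 - \cdots - k_d \;\leq\; -2(d-3) - 1 \;=\; -2d+5,
\end{equation*}
whence $[e_{(3,2^K)}] X_S \leq 4(-2d+5) + 2d - 1 = -6d + 19 < 0$ for all $d \geq 4$, contradicting $e$-positivity. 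The main obstacle is the bookkeeping in the inductive step: one must verify that every nonvanishing term in both sums falls into either the two-odd-leg case (covered by induction) or the one-odd-leg case (covered by Lemma \ref{lm: 2... coef}), and that the arithmetic progressions in the four cases cancel down to exactly $-8k_d+2$. Once this is done, the ``in particular'' part is a one-line bound.
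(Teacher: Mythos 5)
Your proposal is correct and follows essentially the same route as the paper: induction on $d$ using the decomposition of Lemma \ref{lm: spi decomp}, with the coefficient extraction powered by Lemmas \ref{lm: Wolfe}, \ref{lm: mq coef}, and \ref{lm: 2... coef}, and your claimed net contribution of $-8k_d+2$ from the two sums does check out (including the vanishing $P_1$ term, which shortens one of the four arithmetic progressions by one). The only structural differences are that you merge an odd leg with an even leg uniformly for all $d \geq 3$, whereas the paper treats $d=3$ separately (merging the two odd legs so everything becomes products of paths) and merges two even legs for $d \geq 4$ --- your choice neatly avoids the separate $d=3$ case --- and that you spell out the ``in particular'' deduction via Corollary \ref{Cor: var cond}(1), which the paper leaves implicit.
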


\begin{proof}
    The proof is by induction as follows. Let $S$ be as described above. When $d = 2$, note that $S$ is a path of order $2k_1 + 2k_2 + 3$, so by Lemma \ref{lm: Wolfe}, $[e_{(3, 2^{k_1+k_2})}]X_S = 2(k_1+k_2+1) + 2(k_1 + k_2) + 1 = 4(k_1 + k_2) + 3$, as desired.
    
    When $d = 3$, $S$ has legs of lengths $2k_1 + 1, 2k_2 + 1,$ and $2k_3$, so an application of the $3$-legged case of Lemma \ref{lm: spi decomp} with $(a,b,c) = (2k_3, 2k_1 + 1, 2k_2 + 1)$ yields
    \begin{align*}
        X_S &= X_{P_{2k_1 + 2k_2 + 2k_3 + 3}} + \sum_{i=1}^{2k_2 + 1} \left( X_{P_{2k_{3} + 2k_{1} + 1 + i}}X_{P_{2k_2+2 - i}} - X_{P_{2k_3 + i}}X_{P_{2k_1 + 2k_2 + 3 - i}} \right) \\
        &=  X_{P_{2k_1 + 2k_2 + 2k_3 + 3}} + \sum_{i=1}^{2k_2 + 1} \left( X_{P_{2k_{3} + 2k_{1} + 1 + i}}X_{P_{2k_2+2 - i}} - X_{P_{2k_2 + 2k_3 + 2 - i}}X_{P_{2k_1 + 1 + i}} \right).
    \end{align*}
    
    The individual terms in the sum are all of the form $X_{P_{2p}}X_{P_{2q+1}}$, where $2p + 2q + 1 = n = 2k_1 + 2k_2 + 2k_3 + 3$. By Lemma \ref{lm: mq coef} and the result on $d = 2$, the coefficient of $[e_{(3, 2^{k_1+k_2+k_3})}]$ is
    \begin{equation*}
        [e_{(3, 2^{k_1+k_2+k_3})}]X_{P_{2p}}X_{P_{2q+1}} = [e_{(2^p)}]X_{P_{2p}} [e_{(3, 2^{q-1})}]X_{P_{2q+1}} = 2(4(q-1) + 3),
    \end{equation*}
    if $q > 0$ and $0$ otherwise, since if $q = 0$ every term of $X_{P_{2p}}X_1$'s expansion contains $e_1$.

    Substituting this into the previous summation yields, 
    \begin{align*}
         [e_{(3, 2^{k_1+k_2+k_3})}]X_S &= 4(k_1 + k_2 + k_3) + 3 + \sum_{\substack{i=1 \\ i \text{ odd}} \newline}^{2k_2} (-8k_3) + \sum_{\substack{i=1 \\ i \text{ even}}}^{2k_2} 8k_3 - 2(4(k_3 - 1) + 3) \\
         &=  4(k_1 + k_2 - k_3) + 5,
    \end{align*}
    as desired. 

    Now, suppose that the result holds for all spiders with fewer than $d$ legs and that $d \geq 4$. For brevity, let $k = k_1 + \cdots + k_d$. By an application of Lemma \ref{lm: spi decomp} with $i$ and $j$ such that $\lambda_i$ and $\lambda_j$ are two legs of even lengths, say $2k_3$ and $2k_4$ respectively, 
    \begin{equation*}
        X_S = X_{S(\lambda_{-j}(i : 2k_3 + 2k_4))} + \sum_{a = 0}^{2k_4-1} \left( X_{S(\lambda_{-j}(i : 2k_3 + a))} X_{P_{2k_4 - a}} - X_{S(\lambda_{-j}(i : a))}X_{P_{2k_3 + 2k_4 - a}} \right).
    \end{equation*}
    
    As with the calculation for $d = 3$, notice that the terms appearing in the sum are of the form $X_{S(\lambda_{-j}(i : 2p))}X_{P_{2q}}$ or, $X_{S(\lambda_{-j}(i : 2p+1))}X_{P_{2q+1}}$. 
    
    When $p,q > 0$ note that $S(\lambda_{-j}(i : 2p))$ is a spider with $d-1$ legs, precisely two of which have odd length and $S(\lambda_{-j}(i : 2p + 1))$ is a spider with exactly $3$ legs of odd length, while $P_{2q}$ and $P_{2q+1}$ are paths of length at least $2$, so by the inductive assumption and Lemmas \ref{lm: mq coef} and \ref{lm: 2... coef} we have
    \begin{alignat*}{3}
         &&[e_{(3, 2^{k})}] X_{S(\lambda_{-j}(i : 2p))}X_{P_{2q}} &=  [e_{(3, 2^{k - q})}] X_{S(\lambda_{-j}(i : 2p))}  [e_{(2^q)}]X_{P_{2q}} \\
         &&\quad &= 2(4(k_1+k_2 - p - \sum_{t=5}^{d} k_t) + 2(d-1)-1), \\
         &&[e_{(3, 2^{k})}] X_{S(\lambda_{-j}(i : 2p+1))}X_{P_{2q+1}} &=  [e_{(2^{k-q+1})}]X_{S(\lambda_{-j}(i : 2p+1))}  [e_{(3, 2^{q-1})}] X_{P_{2q+1}} \\
         && &= -2(4(q-1)+3),
    \end{alignat*}
    when $p, q > 0$.
    
    Finally, the inductive assumption also gives the following coefficients,
    \begin{align*}
        [e_{(3, 2^k)}] X_{S(\lambda_{-j}(i : 2k_3))}X_{P_{2k_4}} &=  2(4(k_1+k_2 - k_3 - \sum_{t=5}^{d} k_t) + 2(d-1) - 1) \\
        [e_{(3, 2^k)}] X_{S(\lambda_{-j}(i : 0))}X_{P_{2k_3+2k_4}} &=  2(4(k_1+k_2 - \sum_{t=5}^{d} k_t) + 2(d-2) - 1) \\
         [e_{(3, 2^k)}] X_{S(\lambda_{-j}(i : 2k_4-1))}X_{P_{2k_3+1}} &= -2(4k_3-1).
    \end{align*}
    
    When we substitute everything into the previous equation, the terms in the summation cancel out nicely; in particular, for $1 \leq a \leq 2k_4-2$, the relevant coefficient of each summand is simply $(-1)^{a+1}8k_3$. Thus we have,
    \begin{align*}
       [e_{(3, 2^{k})}] X_S = \ &4(k_1+k_2 -  \sum_{t=3}^{d} k_t) + 2(d-1) - 1 + 
       \sum_{\substack{a=1 \\ a \text{ odd}}}^{2k_4-2}8k_3 + \sum_{\substack{a=1 \\ a \text{ even}}}^{2k_4-2} (-8k_3) \\ 
       &+2(4(k_1+k_2 - k_3 - \sum_{t=5}^{d} k_t) + 2(d-1) - 1) \\
       &-2(4(k_1+k_2 - \sum_{t=5}^{d} k_t) + 2(d-2) - 1) + 2(4k_3-1) \\
       = \ & 4(k_1+k_2 -k_3 - \cdots - k_d) + 2d-1,
    \end{align*}
    finishing the proof.
\end{proof}

\section{Further Avenues} \label{sec: fin}
In this paper we further investigated $e$\nobreakdash-positivity conditions for spiders and trees and showed that no tree with a vertex of degree at least six is $e$\nobreakdash-positive. While the original motivation for studying spiders was the reduction from trees to spiders noted in \cite{DSvW}, it appears that the $e$\nobreakdash-positivity of spiders is an interesting topic in its own right. Our results in Sections \ref{sec: small spiders} and \ref{sec: coeff calcs} show that the simpler structure of spiders makes it easier to understand certain coefficients in the elementary symmetric function expansion of the chromatic symmetric function. We mention some possible avenues for extending the work in this paper and discuss their connection to previous literature.
 
First, we restate a few questions that were mentioned earlier, at the ends of Sections \ref{sec: e-pos} and \ref{sec: small spiders}.

\begin{quest}
 Are there any $e$\nobreakdash-positive trees that reduce, by Lemma \ref{lm: dahl trees}, to $S(6,4,1,1)$? What about to $S(15,12,2,1), S(16, 12, 2, 1), S(21, 12, 2, 1),$ or $S(42, 36, 4, 1)$? These spiders seem to pass all of the tests mentioned in Section \ref{sec: e-pos}.
 \end{quest}
 
At the end of Section \ref{sec: small spiders} we briefly commented on $e$\nobreakdash-positive spiders with three legs. While there seem to be many $e$\nobreakdash-positive spiders outside of the family $S(n, n-1, 1)$, we are not aware of any other infinite families. We were apprised of the following two related conjectures of Aliniaeifard, van Willigenburg, and Wang via personal communication.

 \begin{conj} [\!\! Aliniaeifard, Wang, and van Willigenburg \cite{AvWW}] \label{ConjvW1}
  The family of spiders $S(2(2m+1), 2m, 1)$ is $e$\nobreakdash-positive.
 \end{conj}

 \begin{conj} [\!\!  Aliniaeifard, Wang, and van Willigenburg \cite{AvWW}] \label{ConjvW2}
  The family of spiders $S(n(n!m+1), n!m, 1)$ is $e$\nobreakdash-positive.
 \end{conj}
 
As evidence supporting the $e$\nobreakdash-positivity of these two families, Aliniaeifard, van Willigenburg, and Wang have confirmed Conjecture \ref{ConjvW1} for $m \leq 11$ and Conjecture \ref{ConjvW2} for $n\leq3$ and $m \leq 2$.

Finally, we have the following question on $e$\nobreakdash-positive trees that are not spiders. Examples of such trees seem even more rare. 
\begin{quest}
 Does there exist an infinite family of $e$\nobreakdash-positive trees that can be reduced to a spider that is not $e$\nobreakdash-positive through Lemma \ref{lm: dahl trees}? 
\end{quest}
 
Regarding the last question, recall that while Dahlberg, She, and van Willigenburg's original conjecture is for all trees, the results in Sections \ref{sec: small spiders} and \ref{sec: coeff calcs} are only applicable to spiders as they deal with direct calculations of coefficients. For example, we find a (finite) number of trees that are $e$\nobreakdash-positive but can be reduced to a spider that is not $e$\nobreakdash-positive using Lemma \ref{lm: dahl trees}.

\begin{ex}
Let $M_n$ denote the tree that consists of a path of length $2n+1$ with two additional vertices adjacent to the $n$th and $n+1$th vertices on the path. Then notice the $n$th vertex has degree $3$ and that there are subtrees of size $n+2, n-1,$ and $1$ rooted at its three neighbors. Then for $n = 1, 2, 4, 5, 7,$ and $8$ $M_n$ is $e$\nobreakdash-positive. However, for $n = 2, 4, 5,$ and $8$ the spider $S(n+2, n-1, 1)$ is not $e$\nobreakdash-positive. For $n = 10$ and $11$, $M_n$ is also not $e$\nobreakdash-positive.
\end{ex}

Returning to the $e$\nobreakdash-positivity tests of Section \ref{sec: e-pos}, observe that for all of these tests, if the spider $S = S(\lambda_1, \ldots, \lambda_d)$ passes the test, then a spider, $S'$, obtained by adding two legs together, e.g. replacing two legs of lengths $\lambda_i$ and $\lambda_j$ with a single leg of length $\lambda_i+\lambda_j$, does so as well. This due to the following fact, which is not hard to see.

\begin{prop}
Let $S$ and $S'$ be spiders as defined above. If $S$ has a connected partition of type $\mu$, then $S'$ does as well.
\end{prop}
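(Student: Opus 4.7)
The plan is to give a constructive argument: starting from a connected partition of $S$ of type $\mu$, explicitly build a connected partition of $S'$ of the same type. The key structural observation is that in any connected partition $C$ of a spider, exactly one part contains the center $v$, and every other part lies entirely on a single leg (since removing $v$ disconnects the spider into its legs, and each non-center part is connected).

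First I would set up notation. Let $C = \{V_v, W_1, W_2, \dots\}$ be the given connected partition of $S$ of type $\mu$, where $V_v$ is the part containing the center. For each leg $k$, since the portion of $V_v$ on leg $k$ must be connected and incident to $v$, it is a (possibly empty) prefix of leg $k$ of some length $t_k \geq 0$; the remaining $\lambda_k - t_k$ vertices of leg $k$ are partitioned by $C$ into consecutive sub-paths of sizes $s_{k,1},\dots,s_{k,a_k}$. Then $|V_v| = 1 + \sum_{k=1}^d t_k$, and the multiset of sizes $\{|V_v|\} \cup \bigcup_k \{s_{k,1},\dots,s_{k,a_k}\}$ equals the multiset of parts of $\mu$.

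Next I would construct the partition $C'$ of $S'$. Label the merged leg $L$, which is a path of $\lambda_i + \lambda_j$ vertices attached to $v$. Define
\[
V_v' = \{v\} \;\cup\; \bigcup_{k \neq i,j}\{\text{first } t_k \text{ vertices of leg } k\} \;\cup\; \{\text{first } t_i + t_j \text{ vertices of } L\},
\]
which is connected and has size $1 + \sum_{k \neq i,j} t_k + (t_i + t_j) = |V_v|$. For each leg $k \neq i,j$, keep all non-center parts of $C$ on that leg unchanged. On the remaining $\lambda_i + \lambda_j - t_i - t_j$ vertices of $L$ (a sub-path of $L$), place consecutive sub-paths of sizes $s_{i,1},\dots,s_{i,a_i},s_{j,1},\dots,s_{j,a_j}$ in any order; this works because these sizes sum exactly to the length of the remaining sub-path. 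Each of these new parts is a connected sub-path of $L$, hence a valid part.

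Finally I would verify that $C'$ has type $\mu$. By construction every part of $C'$ is connected, the parts cover all $n+1$ vertices of $S'$ disjointly, and the multiset of part sizes is $\{|V_v'|\} \cup \bigcup_{k \neq i,j} \{s_{k,1},\dots,s_{k,a_k}\} \cup \{s_{i,1},\dots,s_{i,a_i},s_{j,1},\dots,s_{j,a_j}\}$, which coincides with the multiset of sizes from $C$ and hence equals $\mu$. There is no real obstacle here; the only thing to check carefully is that the prefix-plus-suffix decomposition of the merged leg $L$ has exactly the right total length, which follows immediately from $|L| = \lambda_i + \lambda_j$ and $t_i + t_j \leq \lambda_i + \lambda_j$.
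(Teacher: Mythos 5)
Your proof is correct. The paper actually states this proposition without proof, dismissing it as ``not hard to see,'' and your constructive argument---observing that the center's part meets each leg in a prefix, that every other part is a sub-path of a single leg, and then transplanting the prefix lengths $t_i+t_j$ and the sub-path sizes onto the merged leg of length $\lambda_i+\lambda_j$---is exactly the kind of routine verification the author had in mind, carried out carefully and completely.
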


Thus, for any $e$\nobreakdash-positivity test that relies on finding a missing type of connected partition, if $S$ passes the test, then $S'$ does as well. In fact, the same can be said for the condition in Theorem \ref{th: odd legs}, even though this Theorem does not rely on finding a missing type of connected partition; if $S$ satisfies Theorem \ref{th: odd legs}, then $S'$ does as well. This leads to the following conjecture.

\begin{conj} \label{conj new}
Let $S = S(\lambda_1, \ldots, \lambda_d)$, be a spider with $d$ legs. Let $S'$ be the spider described above. That is, $S'$ has $d-1$ legs and is obtained by combining two of the legs as described above. If $S$ is $e$\nobreakdash-positive then $S'$ is as well. 
\end{conj}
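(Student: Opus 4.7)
The plan is to attack the conjecture via the spider decomposition in Lemma \ref{lm: spi decomp}. Without loss of generality let $S'$ be obtained from $S$ by merging legs $i$ and $j$, so that $S' = S(\lambda_{-j}(i:\lambda_i+\lambda_j))$. Rearranging that lemma gives
\begin{equation*}
X_{S'} = X_S + \sum_{k=0}^{\lambda_j - 1}\Bigl( X_{S(\lambda_{-j}(i:k))}\, X_{P_{\lambda_i + \lambda_j - k}} - X_{S(\lambda_{-j}(i:\lambda_i+k))}\, X_{P_{\lambda_j - k}} \Bigr),
\end{equation*}
which expresses $X_{S'}$ in terms of $X_S$ and products of chromatic symmetric functions of $(d-1)$-legged spiders with paths. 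The overall strategy is a joint induction on $d$ and on the total number of vertices $n$; the base cases $d=2$ (both $S$ and $S'$ are paths, hence $e$-positive) and $\lambda_j = 0$ (trivial merge) are immediate.

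For the inductive step, the task reduces to showing that the alternating sum on the right-hand side is $e$-positive whenever $X_S$ is. My first move would be to match each negative term $X_{S(\lambda_{-j}(i:\lambda_i+k))}\, X_{P_{\lambda_j-k}}$ with a compatible positive term, for instance $X_{S(\lambda_{-j}(i:k))}\, X_{P_{\lambda_i+\lambda_j-k}}$ whose spider factor and path factor have matching vertex counts, and try to show that each paired difference is itself $e$-positive---perhaps by another application of Lemma \ref{lm: spi decomp} to identify the difference with the chromatic symmetric function of some auxiliary graph. A complementary tactic, closer in spirit to Section \ref{sec: coeff calcs}, would be to work directly in the $e$-basis using Wolfe's formula (Lemma \ref{lm: Wolfe}) and to compare $[e_\mu]X_{S'}$ coefficient-by-coefficient against a nonnegative combination of $[e_{\mu'}]X_S$, closing the induction on $n$ at the level of individual coefficients.

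The main obstacle is precisely this sign-cancellation. Even granting by induction that the smaller spiders appearing as factors are $e$-positive (a property that is itself not directly implied by the $e$-positivity of $S$), a difference of $e$-positive symmetric functions is in general not $e$-positive, so a paired-difference argument requires an essentially rigid combinatorial matching between the paired terms. A possible remedy is to refine the invariant, for example to the chromatic symmetric function in noncommuting variables or to a quasisymmetric refinement, where cancellations can sometimes be made manifest at the level of set partitions before symmetrizing. However, the fact that the conjecture is consistent with every known test of non $e$-positivity, yet resists even a heuristic proof, suggests that any successful argument will require a genuinely new structural understanding of the $e$-expansion of $X_S$ for spiders $S$---one that goes beyond the missing-connected-partition and specific coefficient-calculation methods of Sections \ref{sec: e-pos} and \ref{sec: coeff calcs}.
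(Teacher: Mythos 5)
This statement is Conjecture~\ref{conj new}: the paper offers no proof of it, and it is presented there as open. Your submission is accordingly not being measured against an existing argument, but it must still be judged on whether it closes the claim, and it does not. The setup is fine --- rearranging Lemma~\ref{lm: spi decomp} to write $X_{S'}$ as $X_S$ plus an alternating sum of products $X_{S(\lambda_{-j}(i:k))}X_{P_{\lambda_i+\lambda_j-k}}$ and $X_{S(\lambda_{-j}(i:\lambda_i+k))}X_{P_{\lambda_j-k}}$ is a legitimate identity, and you have correctly located the obstacle in the sign cancellation. But both tactics you propose for the inductive step are left entirely unexecuted, and you concede as much. The gap is not a missing detail; it is the entire content of the argument.

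Two concrete problems make the plan unlikely to work as stated. First, the reduction ``show the alternating sum is $e$-positive whenever $X_S$ is'' is logically misaligned: the alternating sum is a fixed symmetric function determined by $\lambda$, $i$, $j$, so the hypothesis that $X_S$ is $e$-positive cannot enter a term-by-term analysis of it --- you would in effect be proving the unconditional statement that $X_{S'}-X_S$ is $e$-positive, which is far stronger than the conjecture and for which there is no evidence. Second, the inductive hypothesis gives you nothing about the smaller spiders appearing in the sum: $e$-positivity of $S$ does not pass to $S(\lambda_{-j}(i:k))$, and indeed for many values of $k$ these spiders are provably \emph{not} $e$-positive (for instance by Theorem~\ref{Th: mod-test}, since varying $k$ changes the residues of the leg lengths arbitrarily). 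So there is no invariant being inducted on that is simultaneously strong enough to control the correction term and verifiable in the base cases. Your closing assessment --- that a proof would require genuinely new structural understanding of the $e$-expansion --- is accurate, but it is an assessment of why the statement is still a conjecture, not a proof of it.
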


Note that Conjecture \ref{DSvW Conj}, restated from \cite{DSvW}, implies this conjecture. Indeed, if the only $e$\nobreakdash-positive spiders are those with at most three legs, then for every $e$\nobreakdash-positive spider $S$, any $S'$ is a path, which is known to be $e$\nobreakdash-positive (see \cite{S95} for example). However, if there happen to exist spiders with four or five legs that are $e$\nobreakdash-positive, this conjecture would be a natural first step towards understanding which spiders with $4$ or more legs are $e$\nobreakdash-positive. Even if there are no $e$\nobreakdash-positive spiders with four or more legs, we believe that it would be interesting to study any connections between the chromatic symmetric functions of $S$ and $S'$, and that the inductive approach in Section \ref{sec: coeff calcs} could be helpful.

This is also similar to the notion of a graph being \emph{strongly e-positive} presented in \cite{Foley}, where a graph is said to be \emph{strongly $e$\nobreakdash-positive} if all of its induced subgraphs are $e$\nobreakdash-positive. For spiders $S(\lambda)$ and $S(\mu)$, we are replacing the notion of $S(\mu)$ being an \emph{induced subgraph} of $S(\lambda)$ with $\lambda$ being a \emph{refinement} of $\mu$, i.e. a partition formed by adding up parts of $\mu$. 

Finally, we note that partitions of spiders are related to partitions of their line graphs, which makes sense as partitioning the vertices of spiders is very similar to partitioning their edges. In fact the following is true.

\begin{prop}
If a spider $S = S(\lambda_1, \ldots, \lambda_d)$ has a connected partition of every type, then the line graph of $S$, denoted by $S_L$, does as well.
\end{prop}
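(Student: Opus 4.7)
The plan is to lift any target partition $\mu = (\mu_1, \ldots, \mu_k) \vdash n$ for $S_L$ to $\nu := (\mu_1+1, \mu_2, \ldots, \mu_k) \vdash n+1$ (a partition, since $\mu_1 \geq \mu_2$ forces $\mu_1+1$ to be the unique largest part), apply the hypothesis to obtain a connected vertex partition $\{V_1, \ldots, V_k\}$ of $S$ of type $\nu$, and convert it into a connected vertex partition of $S_L$ of type $\mu$. Since a connected vertex partition of $S_L$ is exactly a partition of the edges of $S$ into subtrees, the task reduces to redistributing the $n$ edges of $S$ among $k$ subtree-parts of sizes $\mu_1, \ldots, \mu_k$.

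Let $V_L$ be the unique part of size $\mu_1+1$ and let $V_t$ be the part containing the center $v$. Because $S$ is a spider, every $V_i$ with $i \neq t$ is a subpath on one leg of $S$, and $V_t$ consists of $v$ together with an initial prefix of each leg. Contracting each $V_i$ to a point gives a tree $T_c$ on $k$ vertices (itself a spider, centered at $V_t$) whose edges are in bijection with the $k-1$ boundary edges of $S$. I would then construct the edge partition $\{E_1, \ldots, E_k\}$ by placing every edge internal to $V_i$ in $E_i$, and assigning each boundary edge by the rule: \emph{on the unique $T_c$-path from $V_t$ to $V_L$, give the edge to the endpoint closer to $V_t$; on every other edge of $T_c$, give it to the endpoint farther from $V_t$}. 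A direct count yields $|E_L| = |V_L|-1 = \mu_1$ and $|E_i| = |V_i|$ for $i \neq L$, so the multiset of sizes is $\{\mu_1\} \cup \{|V_i|\}_{i \neq L} = \mu$. Connectedness in $S_L$ is immediate case by case: each $E_i$ with $i \notin \{t, L\}$ is a leg-subpath extended by one adjacent boundary edge, hence still a subpath of $S_L$; $E_L$ is either such a subpath (if $V_L \neq V_t$) or a prefix-union across legs joined by the central $K_d$-clique (if $V_L = V_t$); and when $V_t \neq V_L$, $E_t$ is $V_t$'s prefix-union extended by one additional edge on a single leg, still joined to the rest through the clique (degenerating to a single vertex of $S_L$ in the extreme case $V_t = \{v\}$).

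The one subtlety, and the reason for the two-case assignment rule rather than a plain ``always downstream from $V_t$'' rule, is the case $V_t \neq V_L$: the plain downstream rule would leave $|E_L| = \mu_1+1$ and $|E_t| = |V_t|-1$, which is off by one in two places. Flipping the boundary-edge assignment along the $T_c$-path from $V_t$ to $V_L$ transfers exactly one unit of size from $E_L$ to $E_t$ while leaving every intermediate part unchanged in size (each simply exchanges its upstream boundary edge for its downstream one), and the resulting $E_i$ stay connected in $S_L$ by the same subpath/clique argument. I expect the bookkeeping around this flip, rather than any deep structural issue, to be the main step that requires care.
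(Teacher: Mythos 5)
Your argument is correct. Note that the paper states this proposition without proof, so there is nothing to compare against; your write-up supplies a complete and valid argument. The key observations all check out: a connected partition of $S_L$ is the same as a partition of $E(S)$ into subtrees; lifting $\mu\vdash n$ to $\nu=(\mu_1+1,\mu_2,\ldots,\mu_k)\vdash n+1$ gives a legitimate partition with a \emph{unique} part of size $\mu_1+1$ (since $\mu_2\le\mu_1<\mu_1+1$); and your two-case boundary-edge rule is exactly the rule ``assign each of the $k-1$ boundary edges to the endpoint of the contracted tree $T_c$ \emph{farther from} $V_L$,'' which hands every part other than $V_L$ precisely one extra edge and $V_L$ none, so the sizes come out to $\mu$. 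Two simplifications you could make: stating the rule directly in terms of distance to $V_L$ removes the need for the ``flip'' bookkeeping entirely, and the connectedness case analysis is unnecessary --- each $E_i$ is the edge set of the induced subtree on $V_i$ together with at most one edge incident to $V_i$, hence spans a connected subgraph of $S$, which is equivalent to inducing a connected subgraph of $S_L$; the spider structure plays no role at this step, so the argument in fact proves the statement for an arbitrary tree.
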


It is then natural to consider the following conjecture, which we can check is true for all of the $e$\nobreakdash-positive spiders that we know of --- namely those mentioned at the end of Section \ref{sec: small spiders}.
\begin{conj} \label{conj: line graphs}
If a spider $S$ is $e$\nobreakdash-positive, then its line graph, $S_L$, is as well.
\end{conj}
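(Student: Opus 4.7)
The plan is to exploit the explicit description of $S_L$. For a spider $S = S(\lambda_1, \ldots, \lambda_d)$, the line graph $S_L$ is obtained from the complete graph $K_d$ on vertices $u_1, \ldots, u_d$ (one $u_i$ per edge of $S$ incident to the center) by attaching to each $u_i$ a pendant path of $\lambda_i - 1$ additional vertices. This ``clique-spider'' structure makes $S_L$ amenable to the tools used throughout this paper, especially the triple-deletion rule (Lemma \ref{lm: tri del}) and Wolfe's path coefficient formula (Lemma \ref{lm: Wolfe}). As a first step I would verify the conjecture on the known $e$-positive spiders listed at the end of Section \ref{sec: small spiders} --- the family $S(n, n-1, 1)$ and the sporadic cases $S(6,2,1),\, S(5,3,2),\, S(6,4,2),\, S(8,6,2),\ldots$ --- looking for patterns in the $e$-expansion of $X_{S_L}$, and in particular a closed-form decomposition in the spirit of Lemma \ref{lm: spi decomp}.

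The core idea is to apply Lemma \ref{lm: tri del} to the edges of the clique $K_d \subseteq S_L$. For $d = 3$, taking $v, v_1, v_2$ to be the clique vertices $u_1, u_2, u_3$ yields the identity
\begin{equation*}
X_{S_L} = X_{S(\lambda_1, \lambda_2, \lambda_3 - 1)} + X_{P_{\lambda_1 + \lambda_2 - 1}}\, X_{P_{\lambda_3 - 1}} - X_{P_{\lambda_1 - 1}}\, X_{P_{\lambda_2 + \lambda_3 - 1}},
\end{equation*}
in which the first summand is a spider of the same combinatorial shape as $S$ with one leg shortened by one (recentered at $u_3$), while the remaining summands are products of path chromatic symmetric functions and hence $e$-positive by Lemma \ref{lm: disj}. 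Iterating this reduction on $\lambda_3$ down to the base case $\lambda_3 = 1$, at which point the first summand degenerates to a path, would handle the $d = 3$ case. For $d \geq 4$, the analogous strategy is to peel clique edges one triple at a time, expressing $X_{S_L}$ as a signed sum of chromatic symmetric functions of clique-spiders with smaller cliques together with disjoint unions of paths, setting up an induction on both $d$ and $\lambda_1 + \cdots + \lambda_d$.

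The main obstacle is the \emph{subtraction} in the triple-deletion identity: even when the two positive terms are $e$-positive, their difference with the third term need not be, so termwise arguments do not suffice and genuine cancellation analysis is required. A promising way around this is to lift the argument to the non-commutative refinement of Gebhard and Sagan \cite{GS}, where stronger positivity properties sometimes survive such identities. Alternatively, if Conjecture \ref{DSvW Conj} of Dahlberg et al.\ holds --- so that every $e$-positive spider has $d \leq 3$ --- then only the triangle-with-pendant-paths case needs to be handled, and the identity above combined with Lemma \ref{lm: Wolfe} reduces the problem to a direct coefficient computation for each conjecturally $e$-positive family such as $S(n, n-1, 1)$ or the families of Aliniaeifard et al.\ \cite{AvWW}. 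In either approach the deepest step is to rule out ``accidental'' negative coefficients in the $e$-expansion of $X_{S_L}$ that would arise from triple-deletion cancellation, which will likely demand either a new bijective interpretation of the $e$-coefficients for clique-spiders or a detailed family-specific analysis parallel to the one developed in Section \ref{sec: coeff calcs}.
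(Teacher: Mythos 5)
This statement is Conjecture \ref{conj: line graphs}: the paper gives no proof of it, only the remark that it has been verified computationally on the handful of $e$-positive spiders listed at the end of Section \ref{sec: small spiders}. Your submission is likewise not a proof but a plan of attack, and you say so yourself: the ``deepest step'' of ruling out negative coefficients arising from cancellation is left entirely open, and your two proposed escape routes (lifting to the Gebhard--Sagan noncommutative setting, or conditioning on Conjecture \ref{DSvW Conj}) are unexecuted and, in the second case, conditional on another open conjecture. So the central gap is simply that nothing is proved beyond what the paper already records.

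There are also concrete errors in the one identity you do write down. First, Lemma \ref{lm: tri del} as stated requires $e_3 = v_1v_2 \notin E$, but every pair of clique vertices of $S_L$ is adjacent, so the lemma cannot be applied with $v, v_1, v_2$ all taken inside $K_d$; you would need the companion Orellana--Scott identity for a graph in which the triangle is present, which is not the one restated in this paper. Second, your displayed identity is not homogeneous: $X_{S_L}$ has degree $n = \lambda_1+\lambda_2+\lambda_3$, while $X_{P_{\lambda_1+\lambda_2-1}}X_{P_{\lambda_3-1}}$ has degree $n-2$. The correct triangle decomposition for $d=3$ (deleting one clique edge at a time) reads
\begin{equation*}
X_{S_L} = X_{S(\lambda_2,\lambda_3,\lambda_1-1)} + X_{S(\lambda_1,\lambda_3,\lambda_2-1)} - X_{P_{\lambda_1+\lambda_2}}X_{P_{\lambda_3}},
\end{equation*}
with \emph{two} re-centered spider terms rather than one, and it reintroduces chromatic symmetric functions of spiders --- most of which are not $e$-positive --- so the cancellation problem you flag is, if anything, worse than your sketch suggests. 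One piece of your plan that can be salvaged immediately: for the only known infinite $e$-positive family $S(n,n-1,1)$, the line graph is a $K$-chain, which is already $e$-positive by Corollary 7.7 of \cite{GS}, as the paper notes; this settles the conjecture for that family without any triple-deletion analysis.
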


This question has connections to both Stanley's original observation that $e$\nobreakdash-positivity is related to being claw free, and a class of graphs shown to be $e$\nobreakdash-positive by \cite{GS}.

Regarding the first connection, Stanley noted that being claw free was related to being $e$\nobreakdash-positive, but that the conditions were not equivalent. Stanley's smallest example of a non-$e$\nobreakdash-positive graph with no induced claw was the net, or equivalently $S_L$ where $S$ is the non-$e$\nobreakdash-positive spider $S = S(2,2,2)$. Thus, if being claw free is related to being $e$\nobreakdash-positive, then one might expect that taking the line graph of a spider, an operation that removes an induced claw, preserves $e$\nobreakdash-positivity.

For the second connection, note that for spiders of the form $S(\lambda_1, \lambda_2,  1, \ldots, 1)$, and in particular the $e$\nobreakdash-positive family $S(n, n-1, 1)$, their line graphs are part of a family of graphs known as $K$-chains, which are shown to be $e$\nobreakdash-positive in Corollary 7.7 of \cite{GS}. A $K$-chain is a graph consisting of a sequence of complete graphs sequentially identified at a single vertex. The line graph of a spider of the form $S(\lambda_1, \lambda_2,  1, \ldots, 1)$ is a chain of multiple $K_2$'s and a single $K_d$. Using the $e$\nobreakdash-positivity of $K$-chains, one can also see that the converse of our conjecture is false. For example, the family of spiders $S(2a+1, 2b+1, 1)$ is not $e$\nobreakdash-positive by Theorem \ref{Th: mod-test}, but have $e$\nobreakdash-positive line graphs as just noted. 

Regarding the line graphs of spiders, it is shown in \cite{FKK} that they are distinguished by their chromatic symmetric functions. As far as we know, the $e$\nobreakdash-positivity of line graphs of spiders has not been directly studied before. Regardless of whether there exist $e$\nobreakdash-positive spiders with $4$ or more legs, it could still be worthwhile to consider Conjecture \ref{conj: line graphs} for spiders with three legs and better understand the connection between the chromatic symmetric functions of spiders and their line graphs. 

In general, for a spider $S$ with $d$ legs, $S_L$ consists of a complete graph of size $d$, with a path coming out of each vertex. Such graphs are sometimes called generalized spiders and any of the $e$\nobreakdash-positivity questions regarding spiders could be studied for these graphs as well.

\section*{Acknowledgements}
This work was done at the University of Minnesota Duluth REU, funded by NSF-DMS Grant 1949884 and NSA Grant H98230-20-1-0009. I thank Joe Gallian for suggesting this problem, organizing the program, and providing comments. I am also grateful to Amanda Burcroff, Colin Defant, and Yelena Mandelshtam for advising the program, to Trajan Hammonds and Caleb Ji for their comments, and to Stephanie van Willigenburg for sharing Conjectures \ref{ConjvW1} and \ref{ConjvW2}. Lastly I thank two anonymous reviewers for their helpful suggestions.


\begin{thebibliography}{99}

\bibitem{AvWW}
F. Aliniaeifard, S. van Willigenburg, and V. Wang, Communicated with permission, (2020).

\bibitem{AZ}
J. Aliste-Prieto and J. Zamora, Proper caterpillars are distinguished by their chromatic symmetric function, Discrete Math. 315, 158--164 (2014).

\bibitem{ADZ} 
J. Aliste-Prieto, A. de Mier, and J. Zamora, On trees with the same restricted U-polynomial and the Prouhet-Tarry-Escott problem, Discrete Math. 340, 1435--1441 (2017).

\bibitem{CH}
S. Cho and J. Huh, On $e$\nobreakdash-positivity and $e$-unimodality of chromatic quasisymmetric functions, SIAM J. Discrete Math. 33, 2286--2315 (2019).

\bibitem{CS}
L. Crew and S. Spirkl, A deletion-contraction relation for the chromatic symmetric function, Eur. J. Comb. 89, 103143 (2020).

\bibitem{DFvW}
S. Dahlberg, A. Foley, and S. van Willigenburg, 
Resolving Stanley's $e$\nobreakdash-positivity of claw-contractible-free graphs, arXiv: 1703.05770 (2017).

\bibitem{Lolly}
S. Dahlberg and S. van Willigenburg, Lollipop and lariat symmetric functions, SIAM
J. Discrete Math. 32, 1029--1039 (2018).

\bibitem{DSvW}
S. Dahlberg, A. She, and S. van Willigenburg,
Schur and $e$\nobreakdash-positivity of trees and cut vertices,
Electr. J. Comb. 27, Paper 1.2 (2020).

\bibitem{Foley}
A. Foley, C.T. Ho\`ang, and O.D. Merkel, 
Classes of graphs with $e$\nobreakdash-positive chromatic symmetric function, 
Electron. J. Combin. 26, Paper 3.51 (2019).

\bibitem{FKK}
A. Foley, J. Kazdan, L. Kr\"oll, S. Mart\'inez Alberga, O. Melnyk, and A. Tenenbaum,
Spiders and their kin (Kn), arXiv:1812.03476.

\bibitem{G1}
V. Gasharov,
Incomparability graphs of (3 + 1)-free posets are $s$-positive, 
Discrete Math. 157, 193--197 (1996).

\bibitem{G2}
V. Gasharov, 
On Stanley's chromatic symmetric function and clawfree graphs, 
Discrete Math. 205, 229--234 (1999).

\bibitem{GS}
D. Gebhard and B. Sagan,
A chromatic symmetric function in noncommuting variables,
J. Algebraic Combin. 13, 227--255 (2001).

\bibitem{GP}
M. Guay-Paquet, 
A modular law for the chromatic symmetric functions of (3 + 1)-free posets,
arXiv:1306.2400.

\bibitem{HJ}
S. Heil and C. Ji,
On an algorithm for comparing the chromatic symmetric functions of trees,
Australasian J. of Combin. 75, 210--222 (2019).

\bibitem{LS}
M. Loebl and J. Sereni,
Isomorphisms of weighted trees and Stanley's conjecture for caterpillars,
Ann. Inst. Henri Poincar\'e D 6, 357--384 (2019).

\bibitem{MacDonald}
I. Macdonald, Symmetric functions and Hall polynomials, Oxford University Press
(1998).

\bibitem{MMW}
J. Martin, M. Morin, and J. Wagner,
On distinguishing trees by their chromatic symmetric functions,
J. Combin. Theory Ser. A 115, 237--253 (2008).

\bibitem{OS}
R. Orellana and G. Scott, 
Graphs with equal chromatic symmetric function, 
Discrete Math. 320, 1--14 (2014).

\bibitem{Sagan}
B. Sagan, The Symmetric Group: Representations, Combinatorial Algorithms, and Symmetric Functions, Springer (2001).

\bibitem{SW}
J. Shareshian and M. Wachs, 
Chromatic quasisymmetric functions, 
Adv. Math. 295, 497--551 (2016).

\bibitem{S95}
R. Stanley, 
A symmetric function generalization of the chromatic polynomial of a graph, 
Adv. Math. 111, 166--194 (1995).

\bibitem{SS}
R. Stanley and J. Stembridge,
On immanants of Jacobi-Trudi matrices and permutations with restricted position, J. Combin. Theory Ser. A 62, 261--279 (1993).

\bibitem{Wang}
D. G. L. Wang and M. M. Y. Wang, The e-positivity and Schur positivity of the chromatic symmetric functions of some trees, arXiv:2112.06619.

\bibitem{Wolfe}
M. Wolfe,
Symmetric chromatic functions, 
Pi Mu Epsilon Journal 10, 643--757 (1998).

\bibitem{W97}
H. Wolfgang III, 
Two Interactions Between Combinatorics and Representation Theory: Monomial Immanants and Hochschild Cohomology, PhD thesis, Massachusetts Institute of Technology (1997).


\end{thebibliography}
\end{document}